\theoremstyle{plain}
\newtheorem{theorem}{Theorem}[section]
\newtheorem{corollary}[theorem]{Corollary}
\newtheorem{lemma}[theorem]{Lemma}
\newtheorem{proposition}[theorem]{Proposition}
\theoremstyle{definition}
\newtheorem{definition}[theorem]{Definition}
\theoremstyle{remark}
\newtheorem{remark}[theorem]{Remark}
\newtheorem{example}[theorem]{Example}
\renewcommand{\leq}{\leqslant}
\renewcommand{\geq}{\geqslant}
\renewcommand{\Re}{\ensuremath{\operatorname{Re}}}
\renewcommand{\Im}{\ensuremath{\operatorname{Im}}}
\newcommand{\bbc}{\mathds{C}}
\newcommand{\bbr}{\mathds{R}}
\newcommand{\rd}{{\mathds{R}^d}}
\newcommand{\bbn}{\mathds{N}}
\newcommand{\Pp}{\mathds{P}}
\newcommand{\Ee}{\mathds{E}}
\newcommand{\I}{\mathds{1}}
\newcommand{\cb}{\mathcal{B}}
\newcommand{\cf}{\mathcal{F}}
\newcommand{\cm}{\mathcal{M}}
\newcommand{\abs}[1]{\left| #1 \right|}
\newcommand{\nnorm}[1]{\left\| #1 \right\|}
\begin{document}

\title{\bfseries The Symbol Associated with the Solution of a Stochastic Differential Equation}

\author{%
    \textsc{Ren\'e L. Schilling}%
    \thanks{Institut f\"ur Mathematische Stochastik,
              Technische Universit\"at Dresden,
              D-01062 Dresden, Germany,
              \texttt{rene.schilling@tu-dresden.de}}
    \textrm{\ \ and\ \ }
    \stepcounter{footnote}\stepcounter{footnote}\stepcounter{footnote}
    \stepcounter{footnote}\stepcounter{footnote}%
    \textsc{Alexander Schnurr}%
    \thanks{Lehrstuhl IV, Fakult\"at f\"ur Mathematik, Technische Universit\"at Dortmund,
              D-44227 Dortmund, Germany,
              \texttt{alexander.schnurr@math.tu-dortmund.de}}
    }

\date{}

\maketitle
\begin{abstract}\small
Let $(Z_t)_{t\geq 0}$ be an $\bbr^n$-valued L\'evy process. We
consider stochastic differential equations of the form
\begin{align*}
  dX_t^x&=\Phi(X_{t-}^x) \,dZ_t \\
  X_0^x&=x,\quad x\in\bbr^d,
\end{align*}
where $\Phi: \bbr^d \to \bbr^{d \times n}$ is Lipschitz continuous. We show that the infinitesimal generator of the solution process  $(X^x_t)_{t\geq 0}$ is a pseudo-differential operator whose symbol $p:\bbr^d\times\bbr^d\to \bbc$ can be calculated by
\begin{gather*}
    p(x,\xi):=- \lim_{t\downarrow 0}\Ee^x \left(\frac{e^{i(X^\sigma_t-x)^\top\xi}-1}{t}\right).
\end{gather*}
For a large class of Feller processes many properties of the sample paths can be derived by analysing the symbol. It turns out that the process $(X^x_t)_{t\geq 0}$ is a Feller process if $\Phi$ is bounded and that the symbol is of the form $p(x,\xi)=\psi(\Phi^\top\!(x)\xi)$, where $\psi$ is the characteristic exponent of the driving L\'evy  process.

\vfill\noindent \emph{MSC 2010:} 60J75; 47G30; 60H20; 60J25; 60G51;
60G17.

\vspace{2mm} \noindent \emph{Keywords:} stochastic differential
equation, L\'evy process, semimartingale, pseudo-differential
operator, Blumenthal-Getoor index, sample path properties

\vspace{2mm}\noindent Submitted to EJP on November 26, 2009, final
version accepted August 12, 2010.

\vspace{2mm}\noindent \emph{Acknowledgements:} We would like to
thank an anonymous referee for carefully reading the manuscript and
offering useful suggestions which helped to improve the paper.

\end{abstract}
\vfill

\pagebreak

\section{Introduction}

Within the last ten years a rich theory concerning the relationship
between Feller processes and their so called symbols which appear in
the Fourier representation of their generator has been developed,
see for example the monographs \cite{jacob1,jacob2,jacob3} by Jacob
or the fundamental contributions by Hoh \cite{hoh98,hoh00,hoh02} and
Ka{\ss}mann \cite{kas09}; see also \cite{bot-sch09} and
\cite{jac-sch-survey} for a survey. In this paper we establish a
stochastic formula to calculate the symbol of a class of Markov
processes which we then apply to the solutions of certain stochastic
differential equations (SDEs). If the coefficient of the SDE is
bounded, the solution turns out to be a Feller process. As there are
different conventions in defining this class of processes in the
literature, let us first fix some terminology: consider a time
homogeneous Markov process $(\Omega,\cf, (\cf_t)_{t\geq
0},(X_t)_{t\geq 0},\Pp^x)_{x\in\bbr^d}$ with state space $\bbr^d$;
we will always assume that the process is normal, i.e.\
$\Pp^x(X_0=x)=1$. As usual, we can associate with a Markov process a
semigroup $(T_t)_{t\geq 0}$ of operators on $B_b(\bbr^d)$ by setting
$$
    T_t u(x):= \Ee^x u(X_t), \quad t\geq 0,\; x\in \bbr^d.
$$
Denote by $C_\infty=C_\infty(\bbr^d,\bbr)$ the space of all
functions $u:\bbr^d\to\bbr$ which are continuous and vanish at
infinity, $\lim_{\abs{x}\to\infty}u(x) =0$; then
$(C_\infty,\nnorm{\cdot}_\infty)$ is a Banach space and $T_t$ is for
every $t$ a contractive, positivity preserving and sub-Markovian
operator on $B_b(\bbr^d)$. We call $(T_t)_{t\geq 0}$ a Feller
semigroup and $(X_t)_{t\geq 0}$ a Feller process if the following
conditions are satisfied:
\begin{description}
    \item[\normalfont (F1)] $T_t:C_\infty \to C_\infty$ for every $t\geq 0$,
    \item[\normalfont (F2)] $\lim_{t\downarrow 0} \nnorm{T_tu-u}_\infty =0$ for every $u\in C_\infty$.
\end{description}

The generator $(A,D(A))$ is the closed operator given by
\begin{gather}\label{generator}
    Au:=\lim_{t \downarrow 0} \frac{T_t u -u}{t} \qquad\text{for\ \ } u\in D(A)
\end{gather}
where the domain $D(A)$ consists of all $u\in C_\infty$ for which the limit \eqref{generator} exists uniformly. Often we have to assume that $D(A)$ contains sufficiently many functions. This is, for example the case, if
\begin{gather}\tag{R}\label{rich}
    C_c^\infty\subset D(A).
\end{gather}
A classical result due to Ph.\ Courr\`ege \cite{courrege} shows that, if \eqref{rich} is fulfilled, $A|_{C_c^\infty}$ is a pseudo differential operator with symbol $-p(x,\xi)$, i.e.\ $A$ can be written as
\begin{gather} \label{pseudo}
    Au(x)= - \int_{\bbr^d} e^{ix^\top\xi} p(x,\xi) \widehat{u}(\xi) \,d\xi, \qquad u\in C_c^\infty
\end{gather}
where $\widehat{u}(\xi)=(2\pi)^{-d}\int e^{-iy^\top\xi}u(y) dy$ denotes the Fourier transform and $p:\bbr^d \times \bbr^d \to \bbc$ is locally bounded and, for fixed $x$, a continuous negative definite function in the sense of Schoenberg in the co-variable $\xi$. This means it admits a L\'evy-Khintchine representation
\begin{align} \label{lkfx}
  p(x,\xi)=
  -i \ell^\top\!(x)  \xi + \frac{1}{2} \xi^\top Q(x) \xi -\int_{y\neq 0} \left( e^{i \xi^\top y} -1 - i \xi^\top y \cdot \I_{\{\abs{y}<1\}}(y)\right)N(x,dy)
\end{align}
where for each $x\in\bbr^d$ $(\ell(x),Q(x),N(x,dy))$ is a L\'evy triplet, i.e.\ $\ell(x)=(\ell^{(j)}(x))_{1\leq j \leq d} \in \bbr^d$, $Q(x)=(q^{jk}(x))_{1\leq j,k \leq d}$ is a symmetric positive semidefinite matrix and $N(x,dy)$ is a measure on $\bbr^d\setminus\{0\}$ such that $\int_{y\neq 0} (1 \wedge |y|^2) \,N(x,dy) < \infty$. The function $p(x,\xi)$ is called the symbol of the operator. For details we refer to the treatise by Jacob \cite{jacob1,jacob2,jacob3}.

Combining \eqref{pseudo} and \eqref{lkfx} the generator $A$ of a Feller process satisfying condition \eqref{rich} can be written in the following way
\begin{align*}
    Au(x)
    &=\ell(x) ^\top\nabla u(x) + \frac{1}{2}\sum_{j,k=1}^d q^{jk}(x) \partial_j \partial_k u(x) \\
    &\qquad+ \int_{y\neq 0} \left( u(x+y) - u(x) - y^\top\nabla u(x) \cdot \I_{B_1(0)}(y) \right) N(x,dy)
\end{align*}
for $u\in C_c^\infty(\bbr^d)$. This is called the integro-differential form of the operator.

An important subclass of Feller processes are L\'evy  processes. These are processes which have stationary and independent increments and which are stochastically continuous. For L\'evy  processes $(Z_t)_{t \geq 0}$ it is well known that the characteristic function can be written in the following way
\begin{align*}
  \Ee^z \left( e^{i (Z_t-z)^\top\xi} \right)
  = \Ee^0 \left( e^{i Z_t^\top\xi} \right) = e^{-t\, \psi(\xi)}
\end{align*}
where $\psi:\bbr^d \to \bbc$ is a continuous negative definite function, i.e.\ it has a L\'evy-Khintchine representation where the L\'evy triplet $(\ell,Q,N)$ does not depend on $x$.

This is closely connected to the following result. Every L\'evy process $(Z_t)_{t \geq 0}$ with L\'evy triplet $(\ell,Q,N)$ has the following L\'evy-It\^o decomposition
\begin{gather} \label{lid}
    Z_t
    = \ell t +\Sigma W_t +\int_{[0,t] \times \{\abs{y}<1\}} y \,\left(\mu^Z(ds,dy)-ds\,N(dy)\right) + \sum_{0<s\leq t} \Delta Z_s \I_{\{\abs{\Delta Z_s}\geq 1 \} }
\end{gather}
where $\ell\in\bbr^d$, $\Sigma$ is the unique positive semidefinite square root of $Q\in\bbr^{d\times d}$, $(W_t)_{t\geq 0}$ is a standard Brownian motion, and $\mu^Z$ is the Poisson point measure given by the jumps of $Z$ whose intensity measure is the L\'evy measure $N$. The second and third terms appearing in \eqref{lid} are martingales, while the other two terms are of finite variation on compacts. Therefore every L\'evy  process is a semimartingale. Note that all four terms are independent.

The generator of a L\'evy  process is given by
\begin{gather}
    Au(x)= - \int_{\bbr^d} e^{ix^\top\xi} \psi(\xi) \widehat{u}(\xi) \,d\xi, \quad u\in C_c^\infty,
\end{gather}
i.e.\ L\'evy  processes are exactly those Feller processes whose
generator has `constant coefficients'.

Every L\'evy process has a symbol (that is: a characteristic
exponent) $\psi$; on the other hand, every $\psi$ and every L\'evy
triplet $(\ell,Q,N)$ defines a L\'evy process. For Feller processes
the situation is different: every Feller process satisfying
\eqref{rich} admits a symbol, but it is not known if every symbol of
the form \eqref{lkfx} yields a process. See
\cite{jac-sch-survey,jacob3} for a survey. On the other hand it is
known that the symbol $p(x,\xi)$ can be used to derive many
properties of the associated process $X$.

In this paper we prove a probabilistic formula for the symbol. We
use this formula to calculate the symbol of the solution of a L\'evy
driven SDE. Let us give a brief outline how the paper is organized:
in Section 2 we introduce the symbol of a Markov process. It turns
out that the symbol which is defined in a probabilistic way
coincides with the analytic (in the sense of pseudo-differential
operators) symbol for the class of Feller processes which satisfy
\eqref{rich}. The main result of the paper can be found in Section
3, where we calculate the symbol of a Feller process, which is given
as the strong solution of a stochastic differential equation. In
Section 4 we consider some extensions; these comprise, in
particular, the case
$$
    dX^x=\Phi(X^x) \,dZ_t + \Psi(X^x)\,dt, \quad X_0^x=x,
$$
which is often used in applications. We close by using the symbol of the process $X^x$ to investigate some of its path properties.

\section{The Symbol of a Markov Process}

\begin{definition}
    Let $X$ be an $\bbr^d$-valued Markov process, which is conservative and normal. Fix a starting point $x$ and define $\sigma=\sigma^x_R$ to be the first exit time from the ball of radius $R > 0$:
    $$
        \sigma:=\sigma^x_R:=\inf\big\{t\geq 0 : \nnorm{X_t^x-x} > R \big\}.
    $$
    The function $p:\bbr^d\times \bbr^d \rightarrow \bbc$ given by
    \begin{gather} \label{symbol}
        p(x,\xi):=- \lim_{t\downarrow 0}\Ee^x \left(\frac{e^{i(X^\sigma_t-x)^\top\xi}-1}{t}\right)
    \end{gather}
    is called the \emph{symbol of the process}, if the limit exists for every $x,\xi\in\bbr^d$ independently of the choice of $R>0$.
\end{definition}

\begin{remark}
(a) In \cite{schnurr-diss} the following is shown even for the larger class of It\^o processes in the sense of \cite{vierleute}: fix $x\in\bbr^d$; if the limit \eqref{symbol} exists for one $R$, then it exists for every $R$ and the limit is independent of $R$.

(b) For fixed $x$ the function $p(x,\xi)$ is negative definite as a function of $\xi$. This can be shown as follows: for every $t > 0$ the function $\xi\mapsto \Ee^x e^{i(X_t^\sigma-x)^\top\xi}$ is the characteristic function of the random variable $X_t^\sigma-x$. Therefore it is a continuous positive definite function. By Corollary 3.6.10 of \cite{jacob1} we conclude that $\xi\mapsto -(\Ee^x e^{i(X_t^\top-x)^\top\xi} -1)$ is a continuous negative definite function. Since the negative definite functions are a cone which is closed under pointwise limits, \eqref{symbol} shows that $\xi\mapsto p(x,\xi)$ is negative definite. Note, however, that $\xi\mapsto p(x,\xi)$ is not necessarily continuous.
\end{remark}

If $X$ is a Feller process satisfying \eqref{rich} the symbol $p(x,\xi)$ is exactly the negative definite symbol which appears in the pseudo differential representation of its generator \eqref{pseudo}. A posteriori this justifies the name.

We need three technical lemmas. The first one is known as Dynkin's formula. It follows from the well known fact that
$$
    M_t^{[u]}:=u(X_t)-u(x)-\int_0^t Au(X_s) \,ds
$$
is a martingale for every $u\in D(A)$ with respect to every $\Pp^x$, $x\in\bbr^d$, see e.g.\  \cite{revuzyor} Proposition VII.1.6.

\begin{lemma} \label{lem:dynkin}
Let $X$ be a Feller process and $\sigma$ a stopping time. Then we have
\begin{gather} \label{dynkin}
    \Ee^x \int_0^{\sigma \wedge t} A u(X_s) \,ds = \Ee^x u(X_{\sigma \wedge t})-u(x)
\end{gather}
for all $t>0$ and $u\in D(A)$.
\end{lemma}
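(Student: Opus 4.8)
The plan is to deduce the formula directly from the martingale property of $M_t^{[u]}$ by a single application of the optional sampling theorem. Since the preceding discussion already records that
\begin{gather*}
    M_t^{[u]} = u(X_t) - u(x) - \int_0^t Au(X_s)\,ds
\end{gather*}
is a $\Pp^x$-martingale for every $u\in D(A)$ and every $x\in\bbr^d$, the whole argument rests on evaluating this martingale at the bounded stopping time $\sigma\wedge t$ and taking expectations.

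First I would observe that $\sigma\wedge t\leq t$ is a \emph{bounded} stopping time, irrespective of whether $\sigma$ itself is finite, so no integrability assumption on $\sigma$ is required. Because the process is normal we have $\Pp^x(X_0=x)=1$, whence the martingale starts at $M_0^{[u]}=u(x)-u(x)-0=0$. Doob's optional sampling theorem, applied to the right-continuous martingale $M^{[u]}$ at the bounded time $\sigma\wedge t$, then yields $\Ee^x M_{\sigma\wedge t}^{[u]}=\Ee^x M_0^{[u]}=0$, that is
\begin{gather*}
    \Ee^x\left( u(X_{\sigma\wedge t})-u(x)-\int_0^{\sigma\wedge t}Au(X_s)\,ds\right)=0.
\end{gather*}
Rearranging this identity gives exactly \eqref{dynkin}, and the algebraic step afterwards is immediate.

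The only point that requires care — and where I expect the main (indeed the only) obstacle to lie — is the legitimacy of optional sampling, which reduces to checking that $M^{[u]}$ is sufficiently integrable on $[0,t]$. Here the Feller structure does the work: since $u\in D(A)\subset C_\infty$, both $u$ and $Au$ are bounded, so that $\abs{u(X_s)}\leq\nnorm{u}_\infty$ and $\left|\int_0^s Au(X_r)\,dr\right|\leq s\,\nnorm{Au}_\infty\leq t\,\nnorm{Au}_\infty$ for all $s\leq t$. Consequently $\sup_{s\leq t}\abs{M_s^{[u]}}\leq 2\,\nnorm{u}_\infty+t\,\nnorm{Au}_\infty<\infty$, so the family $\{M_s^{[u]}:s\leq t\}$ is uniformly bounded, hence uniformly integrable; together with the c\`adl\`ag paths of a Feller process this justifies the optional sampling step. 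I would therefore spend essentially all of the effort on this boundedness verification, after which the formula follows with no further computation.
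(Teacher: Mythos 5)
Your proof is correct and follows essentially the same route as the paper, which simply cites the martingale property of $M_t^{[u]}=u(X_t)-u(x)-\int_0^t Au(X_s)\,ds$ and leaves the optional-sampling step at the bounded stopping time $\sigma\wedge t$ implicit. Your verification that $u,Au\in C_\infty$ makes $M^{[u]}$ uniformly bounded on $[0,t]$ is exactly the detail needed to justify that step, so nothing is missing.
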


\begin{lemma} \label{lem:limits}
Let $Y^y$ be an $\bbr$-valued process, starting a.s.\ in $y$, which is right continuous at zero and bounded. Then we have
$$
    \frac 1t\, \Ee \int_0^t Y^y_s \,ds \xrightarrow{t\downarrow 0} y.
$$
\end{lemma}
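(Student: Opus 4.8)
The plan is to subtract off the constant $y$ and reduce everything to a statement about the centered process $Y^y_s-y$, which by hypothesis tends to $0$ as $s\downarrow 0$. Writing $y=\frac1t\int_0^t y\,ds$ and using that $Y^y$ is bounded, hence integrable on the finite measure space $[0,t]\times\Omega$, Fubini lets me pull the expectation inside. I would therefore first rewrite
\begin{gather*}
    \frac{1}{t}\,\Ee\int_0^t Y^y_s\,ds - y = \Ee\left[\frac{1}{t}\int_0^t \big(Y^y_s-y\big)\,ds\right],
\end{gather*}
so that it suffices to show the right-hand side vanishes as $t\downarrow 0$.

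Next I would record the elementary deterministic fact that for any bounded measurable $f:[0,\infty)\to\bbr$ with $\lim_{s\downarrow 0}f(s)=0$, the Ces\`aro average $\frac1t\int_0^t f(s)\,ds$ tends to $0$ as $t\downarrow 0$: given $\varepsilon>0$, choose $\delta>0$ with $\abs{f(s)}<\varepsilon$ for $0<s<\delta$, so that for $t<\delta$ one has $\abs{\frac1t\int_0^t f(s)\,ds}\leq\frac1t\int_0^t\abs{f(s)}\,ds\leq\varepsilon$. I would then apply this pathwise: by right continuity at zero together with $Y^y_0=y$ a.s., for $\Pp$-almost every $\omega$ the path $s\mapsto Y^y_s(\omega)-y$ is bounded and tends to $0$ as $s\downarrow 0$, whence
\begin{gather*}
    \frac{1}{t}\int_0^t\big(Y^y_s(\omega)-y\big)\,ds \xrightarrow{\,t\downarrow 0\,} 0 \quad\text{for $\Pp$-a.e.\ }\omega.
\end{gather*}

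Finally I would interchange the limit $t\downarrow 0$ with the expectation by dominated convergence. Since $Y^y$ is bounded, say $\abs{Y^y_s}\leq M$ uniformly in $s$ and $\omega$, the random variables $\frac1t\int_0^t(Y^y_s-y)\,ds$ are all dominated by the constant $2M$, which is integrable on the probability space. The pathwise convergence above then upgrades to convergence of the expectations, giving $\Ee\big[\frac1t\int_0^t(Y^y_s-y)\,ds\big]\to 0$ and hence the claim.

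The only points requiring care are the measurability and integrability needed to form $\int_0^t Y^y_s\,ds$ and to invoke Fubini — these follow from boundedness together with the (implicit) measurability of the process — and the application of dominated convergence, whose dominating function is supplied precisely by the boundedness hypothesis. I do not anticipate a genuine obstacle here: the role of the two assumptions is transparent, with right continuity at zero driving the pathwise Ces\`aro limit and boundedness enabling the passage to the limit under the expectation.
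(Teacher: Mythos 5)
Your proof is correct and takes essentially the same route as the paper: the paper simply bounds $\abs{\Ee\bigl(\frac1t\int_0^t (Y^y_s-Y^y_0)\,ds\bigr)}$ by $\Ee\bigl(\sup_{0\leq s\leq t}\abs{Y^y_s-Y^y_0}\bigr)$ and invokes the bounded convergence theorem, which is the same combination of pathwise right-continuity at zero plus domination by the uniform bound that drives your argument. Your pathwise Ces\`aro step merely replaces the supremum bound by the time average it dominates, a cosmetic variation rather than a different method.
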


\begin{proof}
It is easy to see that
\begin{align*}
  \abs{\Ee \left(\frac 1t \int_0^t ( Y^y_s-Y^y_0) \,ds\right)}
  \leq \Ee\left(\sup_{0\leq s\leq t}  \abs{Y^y_s-Y^y_0}\right).
\end{align*}
The result follows from the bounded convergence theorem.
\end{proof}

\begin{lemma} \label{lem:folge}
Let  $K\subset \bbr^d$ be a compact set. Let $\chi:\bbr^d \to \bbr$
be a smooth cut-off function, i.e.\ $\chi\in C_c^\infty(\bbr^d)$
with
$$
    \I_{B_1(0)}(y) \leq \chi (y) \leq \I_{B_2(0)}(y)
$$
for $y\in\bbr^d$. Furthermore we define $\chi_n^x(y):=\chi((y-x)/n)$ and $u_n^x(y):= \chi_n^x(y) e^{i y^\top\xi}$. Then we have for all $z\in K$
$$
    \left|u_n^x(z+y)-u_n^x(z)-y^\top\nabla u_n^x(z) \I_{B_1(0)}(y)\right|
    \leq C \cdot \left(\abs{y}^2 \wedge 1 \right).
$$
\end{lemma}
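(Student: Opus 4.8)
The plan is to treat the two regimes $\abs{y}\geq 1$ and $\abs{y}<1$ separately, since the indicator $\I_{B_1(0)}(y)$ switches the expression on the left between a plain difference of function values and a genuine first-order Taylor remainder. In the first regime the indicator vanishes and $\abs{y}^2\wedge 1=1$, so I only need to bound $\abs{u_n^x(z+y)-u_n^x(z)}$. Since $\abs{\chi_n^x}\leq 1$ and $\abs{e^{iy^\top\xi}}=1$, we have $\nnorm{u_n^x}_\infty\leq 1$, and the difference is at most $2=2(\abs{y}^2\wedge 1)$; this handles all $z$ at once.

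In the second regime $\abs{y}<1$ the indicator equals $1$ and $\abs{y}^2\wedge 1=\abs{y}^2$, so I must estimate the first-order Taylor remainder $R:=u_n^x(z+y)-u_n^x(z)-y^\top\nabla u_n^x(z)$. Setting $g(t):=u_n^x(z+ty)$ for $t\in[0,1]$, I would use the identity $R=g(1)-g(0)-g'(0)=\int_0^1(1-t)\,g''(t)\,dt$, which holds for the $C^\infty$ (hence complex $C^2$) function $g$ by applying the one-dimensional Taylor formula with integral remainder to its real and imaginary parts. Since $g''(t)=\sum_{j,k}y_jy_k\,\partial_j\partial_k u_n^x(z+ty)$ and $\int_0^1(1-t)\,dt=\tfrac12$, this gives $\abs{R}\leq\tfrac12\abs{y}^2\,M$ with $M:=\sup_{w}\sum_{j,k}\abs{\partial_j\partial_k u_n^x(w)}$.

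The only point requiring a little care is to see that $M$ is finite and, in fact, bounded uniformly in $n$, $z$ and $y$. Expanding $u_n^x=\chi_n^x\,e^{iy^\top\xi}$ by the product rule and using $\partial_j\chi_n^x(y)=\tfrac1n(\partial_j\chi)((y-x)/n)$ together with $\partial_j\partial_k\chi_n^x(y)=\tfrac1{n^2}(\partial_j\partial_k\chi)((y-x)/n)$, each second partial derivative of $u_n^x$ is a sum of terms bounded by $\tfrac1{n^2}\nnorm{\partial_j\partial_k\chi}_\infty$, $\tfrac1n\nnorm{\partial_j\chi}_\infty\abs{\xi}$ and $\nnorm{\chi}_\infty\abs{\xi}^2$, where I have again used $\abs{e^{iy^\top\xi}}=1$. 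For $n\geq 1$ these bounds are independent of $y$, $z$ and $n$, so $M$ is finite and depends only on $\abs{\xi}$ and the sup-norms of the derivatives of $\chi\in C_c^\infty$. Taking $C:=\max\{2,\,M/2\}$ settles both regimes simultaneously and yields a constant uniform over $z\in K$ and over $n$. I expect no serious obstacle here; the mild subtlety is simply to track that the derivatives of $\chi_n^x$ carry the harmless factors $n^{-1}$ and $n^{-2}$, which is what makes the constant $n$-independent, a property that matters when this estimate is later used as a dominating function in a Lévy-measure integration combined with dominated convergence as $n\to\infty$.
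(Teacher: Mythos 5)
Your proof is correct and takes essentially the same route as the paper: a second-order Taylor estimate producing the factor $\abs{y}^2\wedge 1$, combined with Leibniz' rule and the $n^{-1}$, $n^{-2}$ scaling of the derivatives of $\chi_n^x$ to make the constant uniform in $n$. Your explicit split into $\abs{y}\geq 1$ and $\abs{y}<1$ with the integral-remainder form merely spells out what the paper compresses into ``an application of Taylor's formula'' (and, as a harmless bonus, your constant is uniform over all $z\in\bbr^d$, not only $z\in K$).
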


\begin{proof}
Fix a compact set $K\subset\bbr^d$. An application of Taylor's formula shows that there exists a constant $C_K>0$ such that
$$
    \abs{u_n^x(z+y)-u_n^x(z)-y^\top\nabla u_n(z) \I_{B_1(0)}(y) }
    \leq C_K \left(\abs{y}^2 \wedge 1 \right) \sum_{\abs{\alpha}\leq 2} \nnorm{\partial^\alpha u_n^x}_\infty
$$
uniformly for all $z\in K$. By the particular choice of the sequence
$(\chi_n^x)_{n\in\bbn}$ and Leibniz' rule we obtain that
$\sum_{\abs{\alpha}\leq 2} \nnorm{\partial^\alpha u_n^x}_\infty \leq
\sum_{\abs\alpha\leq 2} \nnorm{\partial^\alpha \chi}_\infty
(1+|\xi|^2)$, i.e.\  it is uniformly bounded for all $n\in\bbn$.
\end{proof}

\begin{theorem}\label{theorem:symbol}
    Let $X=(X_t)_{t\geq 0}$ be a conservative Feller process satisfying condition \eqref{rich}. Then the generator $A|_{C_c^\infty}$ is a pseudo-differential operator with symbol $-p(x,\xi)$, cf.\  \eqref{pseudo}. Let
    \begin{gather} \label{stopping}
        \sigma:= \sigma_R^x:= \inf \{ s\geq 0 : \nnorm{X_s-x}>R \}.
    \end{gather}
    If $x\mapsto p(x,\xi)$ is continuous, then we have
    $$
    \lim_{t\downarrow 0}\Ee^x \left( \frac{e^{i(X_t^\sigma-x)^\top\xi} - 1}{t} \right)= -p(x,\xi),
    $$
    i.e.\ the symbol of the process exists and coincides with the symbol of the generator.
\end{theorem}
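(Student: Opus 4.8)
The plan is to compute the limit $\lim_{t\downarrow 0}\frac{1}{t}\Ee^x(e^{i(X_t^\sigma-x)^\top\xi}-1)$ by applying Dynkin's formula (Lemma~\ref{lem:dynkin}) to a sequence of test functions that locally agree with the oscillating exponential $y\mapsto e^{i(y-x)^\top\xi}$, and then identifying the resulting integrand in the limit with $p(x,\xi)$ via the pseudo-differential representation \eqref{pseudo}. First I would fix $x,\xi$ and introduce the cut-off functions $u_n^x(y)=\chi_n^x(y)e^{iy^\top\xi}$ from Lemma~\ref{lem:folge} (up to the harmless phase factor $e^{-ix^\top\xi}$, which only rescales the whole expression by a unimodular constant). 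Since $u_n^x\in C_c^\infty\subset D(A)$ by \eqref{rich}, Dynkin's formula gives
\begin{gather*}
    \Ee^x u_n^x(X_{\sigma\wedge t}) - u_n^x(x) = \Ee^x\int_0^{\sigma\wedge t} Au_n^x(X_s)\,ds.
\end{gather*}
The key observation is that, before exiting the ball of radius $R$, the process $X_s$ stays in $\overline{B_R(x)}$; choosing $n$ large enough that $\chi_n^x\equiv 1$ on $B_{R}(x)$, the stopped quantity $u_n^x(X_{\sigma\wedge t})$ coincides exactly with $e^{i(X_{\sigma\wedge t})^\top\xi}$, so the left-hand side reproduces (after multiplying by $e^{-ix^\top\xi}$ and dividing by $t$) the increment whose limit we want.

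The second step is to analyze the right-hand side. Writing $Au_n^x$ in the integro-differential form and using the Lévy--Khintchine representation \eqref{lkfx} of the symbol, a direct computation of $Au_n^x(z)$ for $z\in B_R(x)$ shows that, because $\chi_n^x\equiv 1$ near such $z$, the derivative and integral terms acting on $u_n^x$ reduce to those acting on the pure exponential $e^{i y^\top\xi}$, which produces precisely $-p(z,\xi)e^{iz^\top\xi}$. The role of Lemma~\ref{lem:folge} is to supply a uniform domination $|u_n^x(z+y)-u_n^x(z)-y^\top\nabla u_n^x(z)\I_{B_1(0)}(y)|\le C(|y|^2\wedge 1)$ valid for all $z$ in the relevant compact set and all $n$, which controls the jump integral against $N(z,dy)$ and lets me pass to the limit inside the expectation. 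After dividing by $t$, I would apply Lemma~\ref{lem:limits} to the bounded, right-continuous (at zero) process $Y_s := e^{-ix^\top\xi}Au_n^x(X_s)$ started at $Y_0=Au_n^x(x)e^{-ix^\top\xi}=-p(x,\xi)$, which yields
\begin{gather*}
    \frac{1}{t}\,\Ee^x\int_0^{\sigma\wedge t} e^{-ix^\top\xi}Au_n^x(X_s)\,ds \xrightarrow{t\downarrow 0} -p(x,\xi).
\end{gather*}

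The subtle points that I expect to require the most care are two. The first is justifying that Lemma~\ref{lem:limits} applies: the integrand $e^{-ix^\top\xi}Au_n^x(X_s)$ must be bounded and right-continuous at $s=0$, which follows from boundedness and continuity of $p(\,\cdot\,,\xi)e^{i(\cdot)^\top\xi}$ together with the right-continuity of the Feller paths, and here is exactly where the hypothesis that $x\mapsto p(x,\xi)$ is \emph{continuous} enters — it guarantees $Y_s\to Y_0$ as $s\downarrow 0$. The second, and the main obstacle, is the bookkeeping needed to replace the stopped integral $\int_0^{\sigma\wedge t}$ by $\int_0^t$ without error in the limit: one must check that the contribution of the event $\{\sigma\le t\}$ is negligible as $t\downarrow 0$, i.e.\ that $\Pp^x(\sigma_R^x\le t)/t\to 0$, which holds because $\sigma_R^x>0$ almost surely and $X$ has right-continuous paths starting at $x$, so the process cannot leave $B_R(x)$ instantaneously. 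Controlling these two approximations simultaneously — the localization via $\chi_n^x$ and the stopping via $\sigma$ — while keeping the domination of Lemma~\ref{lem:folge} uniform, is the heart of the argument.
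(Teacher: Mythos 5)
Your overall architecture (Dynkin's formula applied to the cut-off functions $u_n^x$, then Lemma \ref{lem:limits}) is the same as the paper's, but your localization step contains a genuine error that breaks the argument. You claim that for a \emph{fixed} large $n$ (with $\chi_n^x\equiv 1$ on $\overline{B_R(x)}$) the stopped value $u_n^x(X_{\sigma\wedge t})$ ``coincides exactly'' with $e^{iX_{\sigma\wedge t}^\top\xi}$. This fails on $\{\sigma\leq t\}$: the process typically exits $B_R(x)$ by a jump, and $X_\sigma$ can land anywhere in $\bbr^d$, in particular outside $B_{2n}(x)$ where $u_n^x$ vanishes or in the annulus where $\chi_n^x\in(0,1)$. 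The resulting error $\frac 1t\,\Ee^x\abs{(u_n^x-e_\xi)(X_{\sigma\wedge t})}\leq \frac 2t\,\Pp^x\big(\sigma\leq t,\ X_\sigma\notin B_n(x)\big)$ is, for fixed $n$, of the order of the tail mass of the L\'evy kernel and does \emph{not} vanish as $t\downarrow 0$ whenever the jumps are unbounded. The same problem recurs on the right-hand side: since $A$ is nonlocal, the jump integral in the integro-differential form evaluates $u_n^x(z+y)$ for large $\abs{y}$, where the cut-off is active, so $Au_n^x(z)\neq -p(z,\xi)e^{iz^\top\xi}$ on $B_R(x)$ for any finite $n$; the identity (and hence your initial value $Y_0=-p(x,\xi)$ for Lemma \ref{lem:limits}) holds only in the limit $n\to\infty$, by Courr\`ege's results. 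The two limits $n\to\infty$ and $t\downarrow 0$ do not commute for free — this interchange is precisely the technical heart of the paper's proof, which takes $\lim_{n\to\infty}$ \emph{first} at fixed $t$ (bounded convergence on the left; dominated convergence under the time integral on the right, the uniform-in-$n$ domination on $K=\overline{B_R(x)}$ being supplied by Lemma \ref{lem:folge} together with the appendix estimates and the local boundedness of $p$), and only afterwards lets $t\downarrow 0$.

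Your second repair attempt is also false as stated: it is \emph{not} true that $\Pp^x(\sigma_R^x\leq t)/t\to 0$. Right-continuity of the paths gives $\Pp^x(\sigma\leq t)\to 0$, but with no rate; for a compound Poisson process whose jumps exceed $R$ with intensity $\lambda'$ one has $\Pp^x(\sigma\leq t)=1-e^{-\lambda' t}\sim \lambda' t$. Indeed, if your $o(t)$ claim were correct, the big-jump part of the L\'evy measure could never contribute to the symbol, contradicting already the L\'evy case. Fortunately this step is unnecessary: writing $\int_0^{\sigma\wedge t}f(X_s)\,ds=\int_0^t f(X_s^\sigma)\,\I_{\llbracket 0,\sigma\llbracket}(s)\,ds$ (exact up to the Lebesgue-null endpoint), one applies Lemma \ref{lem:limits} directly to the bounded process $Y_s=f(X_s^\sigma)\I_{\llbracket 0,\sigma\llbracket}(s)$, which is right-continuous at $0$ with $Y_0=f(x)$ because $\sigma>0$ a.s.; this is exactly how the paper proceeds, and it is also the place where the continuity of $x\mapsto p(x,\xi)$ enters, as you correctly anticipated.
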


The assumption that $x\mapsto p(x,\xi)$ is continuous is not a severe restriction. All non-pathological known examples of Feller processes satisfy this condition. It is always fulfilled, if $X$ has only bounded jumps, cf.\ the discussion in \cite{bot-sch09}.
\begin{proof}[Proof of Theorem \ref{theorem:symbol}]
    Let $(\chi_n^x)_{n\in\bbn}$ be the sequence of cut-off functions of Lemma \ref{lem:folge} and we write $e_\xi(x):=e^{i x^\top\xi}$ for $x,\xi\in\bbr^d$. By the bounded convergence theorem and Dynkin's formula \eqref{dynkin} we see
\begin{align*}
    \Ee^x \left( e^{i (X_t^\sigma -x)^\top\xi } -1\right)
    &=\lim_{n\to\infty} \big(\Ee^x \chi_n^x (X_t^\sigma) e_\xi(X_t^\sigma) e_{-\xi}(x) -1 \big) \\
    &=e_{-\xi}(x) \lim_{n\to\infty} \Ee^x \big(\chi_n^x(X_t^\sigma) e_\xi (X_t^\sigma) - \chi_n^x(x) e_\xi(x) \big)\\
    &=e_{-\xi}(x) \lim_{n\to\infty} \Ee^x \int_0^{\sigma \wedge t} A(\chi_n^x  e_\xi)(X_s) \,ds \\
    &=e_{-\xi}(x) \lim_{n\to\infty} \Ee^x \int_0^{\sigma \wedge t} A(\chi_n^x  e_\xi)(X_{s-}) \,ds.
\end{align*}
The last equality follows since we are integrating with respect to
Lebesgue measure and since a c\`adl\`ag process has a.s.\ a
countable number of jumps.   Using Lemma \ref{lem:folge} and the
integro-differential representation of the generator $A$ it is not
hard to see that for all $z\in K := \overline{B_R(x)}$
\begin{align*}
    A(\chi_n e_\xi)(z)
    &\leq c_\chi \left(|\ell(z)| + \frac 12\sum_{j,k=1}^d |q^{jk}(z)| + \int_{y\neq 0} (1\wedge |y|^2)\,N(z,dy)\right) (1+|\xi|^2)\\
    &\leq c_\chi'\sup_{z\in K} \sup_{| \eta |\leq 1} | p(z, \eta )|  (1+|\xi|^2) ;
\end{align*}
the last estimate follows with (some modifications of) techniques
from \cite{schilling98} which we will, for the readers' convenience,
work out in the Appendix. Being the symbol of a Feller process,
$p(x,\xi)$ is locally bounded (cf. \cite{courrege} Th\'eor\`eme
3.4). By definition of the stopping time $\sigma$ we know that for
all $s\leq\sigma\wedge t$ we have $z =
X_{s-}\in\overline{B_R(x)}=K$. Therefore, the integrand $A(\chi_n^x
e_\xi)(X_{s-})$, $s\leq \sigma\wedge t$ appearing in the above
integral is bounded and we may use the dominated convergence theorem
to interchange limit and integration. This yields
\begin{align*}
    \Ee^x \left( e^{i (X_t^\sigma -x)^\top\xi } -1\right)
    &=e_{-\xi}(x) \Ee^x \int_0^{\sigma \wedge t} \lim_{n\to \infty} A(\chi_n^x e_\xi)(z) |_{z=X_{s-}} \,ds \\
    &=-e_{-\xi}(x) \Ee^x \int_0^{\sigma \wedge t}  e_\xi(z) p(z,\xi)|_{z=X_{s-}} \,ds.
\end{align*}
The   second  equality follows from \cite{courrege} Sections 3.3 and
3.4. Therefore,
\begin{align*}
    \lim_{t\downarrow 0} \frac{\Ee^x \left(e^{i(X_t^\sigma - x)^\top\xi}-1\right)}{t}
    &=-e_{-\xi}(x) \lim_{t \downarrow 0} \Ee^x\left( \frac 1t \int_0^t e_\xi(X_{s-}^\sigma) p(X_{s-}^\sigma,\xi) \I_{\llbracket 0,\sigma\llbracket}(s) \,ds\right) \\
    &=-e_{-\xi}(x) \lim_{t \downarrow 0} \Ee^x\left( \frac 1t \int_0^t e_\xi(X_{s}^\sigma) p(X_{s}^\sigma,\xi) \I_{\llbracket 0,\sigma\llbracket}(s) \,ds\right)
\end{align*}
since we are integrating with respect to Lebesgue measure. The process $X^\sigma$ is bounded on the stochastic interval $\llbracket 0,\sigma\llbracket$ and $x\mapsto p(x,\xi)$ is continuous for every $\xi\in\bbr^d$. Thus, Lemma \ref{lem:limits} is applicable and gives
\begin{gather*}
    \lim_{t\downarrow 0} \frac{\Ee^x \left(e^{i(X_t^\sigma - x)^\top\xi}-1\right)}{t}
    =-e_{-\xi}(x) e_{\xi}(x) p(x,\xi)
    =-p(x,\xi).
    \qedhere
\end{gather*}
\end{proof}

Theorem \ref{theorem:symbol} extends an earlier result from \cite{schilling98pos} where additional assumptions are needed for $p(x,\xi)$. An extension to It\^o processes is contained in \cite{schnurr-diss}.

\section{Calculating the Symbol}

Let $Z=(Z_t)_{t\geq 0}$ be an $n$-dimensional L\'evy process starting at zero with symbol $\psi$ and consider the following SDE
\begin{align} \label{sde}
  dX_t^x&=\Phi(X_{t-}^x) \,dZ_t \\
  X_0^x&=x \nonumber
\end{align}
where $\Phi: \bbr^d \to \bbr^{d \times n}$ is locally Lipschitz continuous and satisfies the following linear growth condition: there exists a $K>0$ such that for every $x\in\bbr^d$
\begin{align} \label{growth}
    \abs{\Phi(x)}^2 \leq K(1+\abs{x}^2).
\end{align}
Since $Z$ takes values in $\bbr^n$ and the solution $X^x$ is $\bbr^d$-valued, \eqref{sde} is a shorthand for the system of stochastic integral equations
$$
    X^{x,(j)} =  x^{(j)} +  \sum_{k=1}^n \int\Phi(X_-)^{jk} \,dZ^{(k)},\quad j=1,\ldots,d.
$$

A minor technical difficulty arises if one takes the starting point into account and if all processes $X^x$ should be defined on the same probability space. The original space $(\Omega, \cf, (\cf_t)_{t\geq 0}, \Pp)$ where the driving L\'evy process is defined is, in general, too small as a source of randomness for the solution processes. We overcome this problem by enlarging the underlying stochastic basis as in \cite{protter}, Section 5.6:
\begin{align*}
    \overline{\Omega}&:= \bbr^d \times \Omega,
    & \Pp^x&:=\varepsilon_x \times \Pp,\quad x \in \bbr^d,\\
    \overline{\cf_t^0}&:= \cb^d\otimes \cf_t &
    \overline{\cf_t} &:=\bigcap_{u>t}\overline{\cf_u^0}
\end{align*}
where $\varepsilon_x$ denotes the Dirac measure in $x$. A random variable $Z$ defined on $\Omega$ is considered to be extended automatically to $\overline{\Omega}$ by $Z(\overline{\omega})=Z(\omega)$, for $\overline{\omega}=(x,\omega)$.

It is well known that under the local Lipschitz and linear growth
conditions imposed above, there exists a unique conservative
solution of the SDE \eqref{sde}, see e.g.\ \cite{metivier} Theorem
34.7 and Corollary 35.3.

\begin{theorem}\label{sde-symbol}
    The unique strong solution of the SDE \eqref{sde} $X_t^x(\omega)$ has the symbol $p:\bbr^d\times \bbr^d \to \bbc$ given by
    $$
        p(x,\xi)=\psi(\Phi^\top\!(x)\xi)
    $$
    where $\Phi$ is the coefficient of the SDE and $\psi$ the symbol of the driving L\'evy process.
\end{theorem}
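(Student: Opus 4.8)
The plan is to compute the limit in the definition \eqref{symbol} of the symbol directly, exploiting that the SDE \eqref{sde} gives an explicit semimartingale representation of $X^x$. Unlike the proof of Theorem \ref{theorem:symbol}, which had to approximate $e^{iy^\top\xi}$ by compactly supported functions in $D(A)$ and invoke Dynkin's formula, I would apply It\^o's formula for semimartingales with jumps directly to the smooth bounded function $f(y):=e^{iy^\top\xi}$ along the solution and then stop at $\sigma=\sigma_R^x$ from \eqref{stopping} (equivalently, work on the stochastic interval $\llbracket 0,\sigma\llbracket$). Writing $\eta(s):=\Phi^\top\!(X_{s-}^x)\xi\in\bbr^n$ and using $\nabla f=i\xi f$, $\partial_j\partial_k f=-\xi_j\xi_k f$, I would insert the dynamics $dX_s=\Phi(X_{s-})\,dZ_s$ together with the L\'evy--It\^o decomposition \eqref{lid} of $Z$. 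The first-order term then splits into a drift $i\eta(s)^\top\ell$, a Brownian stochastic integral, a compensated small-jump integral, and a non-compensated big-jump sum; the second-order term contributes $-\frac{1}{2}\eta(s)^\top Q\,\eta(s)$, since only the Brownian part $\Sigma W$ enters the continuous quadratic variation and $Q=\Sigma\Sigma^\top$; and the It\^o jump correction contributes $\sum_{s\leq t}f(X_{s-})\big(e^{i\eta(s)^\top\Delta Z_s}-1-i\eta(s)^\top\Delta Z_s\big)$, where $\Delta X_s=\Phi(X_{s-})\Delta Z_s$.

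The crux is a purely algebraic regrouping. Combining the big-jump part of the first-order term with the jump correction, rewriting everything against the jump measure $\mu^Z$ and compensating by $N(dy)\,ds$, all jump contributions separate into a martingale integral against $\mu^Z-N\,ds$ plus a finite-variation integral $\int_{y\neq0}(e^{i\eta(s)^\top y}-1-i\eta(s)^\top y\,\I_{\{\abs{y}<1\}})\,N(dy)$. Collecting every finite-variation term, the factor multiplying $f(X_{s-})$ becomes
\[
  i\eta(s)^\top\ell-\frac{1}{2}\eta(s)^\top Q\,\eta(s)+\int_{y\neq0}\big(e^{i\eta(s)^\top y}-1-i\eta(s)^\top y\,\I_{\{\abs{y}<1\}}\big)\,N(dy)=-\psi(\eta(s)),
\]
which is exactly $-\psi(\Phi^\top\!(X_{s-})\xi)$ by the L\'evy--Khintchine representation of the characteristic exponent $\psi$. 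Hence $f(X_t^\sigma)-f(x)=-\int_0^{t\wedge\sigma}f(X_{s-})\,\psi(\Phi^\top\!(X_{s-})\xi)\,ds+(\text{martingale terms})$.

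Taking $\Ee^x$ and multiplying by $e^{-ix^\top\xi}$, and using $f(x)=e^{ix^\top\xi}$, I would obtain
\[
  \Ee^x\big(e^{i(X_t^\sigma-x)^\top\xi}-1\big)=-e^{-ix^\top\xi}\,\Ee^x\!\int_0^{t\wedge\sigma} e^{iX_{s-}^\top\xi}\,\psi(\Phi^\top\!(X_{s-})\xi)\,ds,
\]
then divide by $t$ and let $t\downarrow0$, applying Lemma \ref{lem:limits} (to real and imaginary parts) to the bounded, right-continuous integrand $Y_s:=e^{iX_s^\top\xi}\psi(\Phi^\top\!(X_s)\xi)\,\I_{\llbracket0,\sigma\llbracket}(s)$, whose value at $s=0$ is $e^{ix^\top\xi}\psi(\Phi^\top\!(x)\xi)$ because $X_0=x$ and $\sigma>0$ a.s. This yields $\lim_{t\downarrow0}t^{-1}\Ee^x(e^{i(X_t^\sigma-x)^\top\xi}-1)=-\psi(\Phi^\top\!(x)\xi)$, that is, $p(x,\xi)=\psi(\Phi^\top\!(x)\xi)$.

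The main obstacle is justifying that the three stochastic-integral terms are genuine martingales with vanishing expectation rather than mere local martingales. Here the stopping at $\sigma$ is essential: for $s\leq\sigma$ the left limits satisfy $X_{s-}\in\overline{B_R(x)}$, so by continuity of $\Phi$ the frequencies $\eta(s)$ are uniformly bounded on $\llbracket0,\sigma\llbracket$. Then the Brownian and the compensated small-jump integrands are square-integrable (using $\int_{\abs{y}<1}\abs{y}^2\,N(dy)<\infty$), while the remaining jump integrand $f(X_{s-})(e^{i\eta(s)^\top y}-1-i\eta(s)^\top y\,\I_{\{\abs{y}<1\}})$ is dominated by $C(\abs{y}^2\wedge1)$ and hence $N$-integrable, so all three integrals are true martingales and drop out under $\Ee^x$. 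A secondary point requiring care is the absolute convergence of the It\^o jump sum and the interchange of expectation and limit, but these follow from the same local boundedness of $\eta(s)$ together with the integrability of $N$ against $\abs{y}^2\wedge1$.
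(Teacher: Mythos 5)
Your proposal is correct and follows essentially the same route as the paper's proof: It\^o's formula applied to $e^{i(\cdot-x)^\top\xi}$ along the stopped solution, the L\'evy--It\^o decomposition to split off drift, Gaussian and jump parts, compensation of the jump measure with the integrand bounded by $C(\abs{y}^2\wedge 1)$ thanks to the boundedness of $\Phi^\top\!(X_{s-})\xi$ on $\llbracket 0,\sigma\rrbracket$, vanishing expectation of the martingale terms, and Lemma \ref{lem:limits} to compute the limit $t\downarrow 0$. The only cosmetic difference is that you collect all finite-variation terms into $-\psi(\Phi^\top\!(X_{s-})\xi)$ before passing to the limit, applying Lemma \ref{lem:limits} once, whereas the paper takes the limit term by term and assembles the L\'evy--Khintchine representation at the very end.
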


\begin{proof}
To keep notation simple, we give only the proof for $d=n=1$. The
multi-dimensional version is proved along the same lines, the only
complication being notational; a detailed account is given in
\cite{schnurr-diss}. Let $\sigma$ be the stopping time given by
\eqref{stopping}. Fix $x,\xi \in \bbr$. We apply It\^o's formula for
jump processes to the function $e_\xi(\cdot - x) =
\exp(i(\cdot-x)\xi)$:
\begin{equation}\label{threeterms}\begin{aligned}
    \frac 1t\,\Ee^x \big( &e^{i(X^\sigma_t-x)\xi} -1\big)\\
    &=\frac 1t\,\Ee^x \bigg( \int_{0+}^t i \xi\,e^{i (X^\sigma_{s-}-x) \xi} \,dX^\sigma_s
    - \frac 12 \int_{0+}^{t} \xi^2\, e^{i (X^\sigma_{s-}-x) \xi} \,d[X^\sigma,X^\sigma]_s^c\\
    &\qquad+e^{-ix \xi} \sum_{0<s \leq t} \left(e^{i X^\sigma_s \xi} - e^{i X^\sigma_{s-} \xi} -i\xi
      e^{iX^\sigma_{s-} \xi}\Delta X^\sigma_s\right) \bigg).
\end{aligned}\end{equation}
For the first term we get
\begin{align}
  \frac 1t\,\Ee^x &\int_{0+}^{t} \left(i \xi\, e^{i (X^\sigma_{s-}-x) \xi}\right) \,dX^\sigma_s \notag\\
  &= \frac 1t\,\Ee^x \int_{0+}^{t} \left(i \xi \, e^{i (X^\sigma_{s-}-x) \xi}\right) d\left(\int_0^s\Phi(X_{r-})\I_{\llbracket 0,\sigma\rrbracket}(\cdot,r) \,dZ_r\right)\notag\\
  &= \frac 1t\, \Ee^x \int_{0+}^{t} \left(i \xi \, e^{i (X^\sigma_{s-}-x) \xi}
        \Phi(X_{s-})\I_{\llbracket 0,\sigma\rrbracket}(\cdot,s) \right)\,dZ_s\notag\\
  &= \frac 1t\, \Ee^x \int_{0+}^{t} \left(i \xi \, e^{i (X^\sigma_{s-}-x) \xi}
        \Phi(X_{s-})\I_{\llbracket 0,\sigma\rrbracket}(\cdot,s)\right) \,d(\ell s)\label{drift}\\
  &\qquad  + \frac 1t\, \Ee^x \int_{0+}^{t} \left(i \xi \, e^{i (X^\sigma_{s-}-x) \xi}
        \Phi(X_{s-})\I_{\llbracket 0,\sigma\rrbracket}(\cdot,s)\right) d\left(\sum_{0<r\leq s} \Delta Z_r \I_{\{\abs{\Delta Z_r}\geq 1 \}}\right)\label{jumps}
\end{align}
where we have used the L\'evy-It\^o decomposition \eqref{lid}. Since the integrand is bounded, the martingale terms of \eqref{lid} yield martingales whose expected value is zero.

First we deal with \eqref{jumps} containing the big jumps. Adding this integral to the third expression on the right-hand side of \eqref{threeterms} we obtain
\begin{align*}
    \frac 1t\,\Ee^x & \sum_{0<s \leq t} \Big(e^{i(X^\sigma_{s-}-x)\xi}
    \left( e^{i \Phi(X_{s-})\Delta Z_s \xi} -1- i\xi \Phi(X_{s-})\Delta Z_s  \I_{\{\abs{\Delta X_s}< 1
    \}}\right) \I_{\llbracket 0,\sigma\rrbracket}(\cdot,s) \Big)  \\
    &= \frac 1t\,\Ee^x \int_{]0,t]\times \bbr \setminus \{ 0\}}  H_{x,\xi}(\cdot\,;s-,y)
    \mu^X(\cdot\,;ds,dy) \\
    &= \frac 1t\,\Ee^x \int_{]0,t]\times \bbr \setminus \{ 0\}} H_{x,\xi}(\cdot\,;s-,y)
    \nu(\cdot\,;ds,dy) \\
  & \xrightarrow{t\downarrow 0}  \int_{\bbr \setminus \{ 0\}}
    \left( e^{i \Phi(x)y \xi} -1- i\xi \Phi(x)y  \I_{\{\abs{y}< 1 \}} \right) N(dy)
\end{align*}
where we have used Lemma \ref{lem:limits} and the shorthand
$$
    H_{x,\xi}(\omega;s,y):=
    e^{i(X^\sigma_s(\omega)-x)\xi} \left( e^{i \Phi(X_{s}(\omega))y \xi} -1 - i\xi \Phi(X_{s}(\omega))y \I_{\{\abs{y}< 1\}}\right) \I_{\llbracket 0,\sigma\rrbracket}(\omega,s).
$$
The calculation above uses some well known results about integration with respect to integer valued random measures, see \cite{ikedawat} Section II.3, which allow us to integrate `under the expectation' with respect to the compensating measure $\nu(\cdot\,;ds,dy)$ instead of the random measure itself. In the case of a L\'evy process the compensator is of the form $\nu(\cdot\,;ds,dy)=N(dy) \,ds $, see \cite{ikedawat} Example II.4.2.

For the drift part \eqref{drift} we obtain
\begin{align*}
    \frac 1t\, \Ee^x &\int_{0+}^{t} \left(i \xi \cdot e^{i (X^\sigma_{s-}-x) \xi}
      \Phi(X_{s-})\I_{\llbracket 0,\sigma\rrbracket}(\cdot,s)\ell \right) \,ds \\
    &= i \xi\ell  \cdot \Ee^x \frac 1t\int_{0}^{t} \left(e^{i (X^\sigma_{s}-x) \xi}
      \Phi(X_{s}) \I_{\llbracket 0,\sigma\llbracket}(\cdot,s)\right)\,ds
    \xrightarrow{t\downarrow 0} i \xi \ell \Phi(x)
\end{align*}
where we have used Lemma \ref{lem:limits} in a similar way as in the proof of Theorem \ref{theorem:symbol}.

We can deal with the second expression on the right-hand side of
\eqref{threeterms} in a similar way, once we have worked out the
square bracket of the process.
\begin{align*}
  [X^\sigma,X^\sigma]_t^c =([X,X]_t^c)^\sigma
  &= \left(\left[{\textstyle \int_0^\cdot \Phi(X_{r-}) dZ_r,\; \int_0^\cdot \Phi(X_{r-}) dZ_r}\right]_t^c\right)^\sigma\\
  &=\int_0^t \Phi(X_{s-})^2 \I_{\llbracket 0,\sigma\rrbracket}(\cdot,s) \,d[Z,Z]_s^c  \\
  &=\int_0^t \Phi(X_{s-})^2 \I_{\llbracket 0,\sigma\rrbracket}(\cdot,s) \,d(Q s)
\end{align*}
Now we can calculate the limit for the second term
\begin{align*}
  \frac{1}{2t}\,\Ee^x &\int_{0+}^{t} \left(-\xi^2 \, e^{i (X^\sigma_{s-}-x) \xi}\right) \,d[X^\sigma,X^\sigma]_s^c \\
  &= \frac{1}{2t}\, \Ee^x \int_{0+}^{t} \left(-\xi^2\, e^{i (X^\sigma_{s-}-x) \xi} \right)\,d\left(\int_{0}^s (\Phi(X_{r-}))^2 \I_{\llbracket 0,\sigma\rrbracket}(\cdot,r) Q \,dr\right) \\
  &=-\frac{1}{2}\,\xi^2 Q\, \Ee^x \left(\frac 1t \int_{0}^t \left( e^{i(X^\sigma_s-x)\xi}\Phi(X_{s})^2 \I_{\llbracket 0,\sigma\llbracket}(\cdot,s) \right) \,ds\right) \\
  &\xrightarrow{t\downarrow 0} -\frac{1}{2}\, \xi^2 Q\, \Phi(x)^2.
\end{align*}
In the end we obtain
\begin{align*}
  p(x,\xi)
  &= -i \ell  (\Phi(x)\xi) + \frac{1}{2} (\Phi(x)\xi)Q (\Phi(x)\xi) \\
  &\qquad -\int_{y\neq 0} \left(e^{i (\Phi(x)\xi)y} -1 - i (\Phi(x)\xi)y \cdot \I_{\{\abs{y}<1\}}(y)\right) N(dy)\\
  &=\psi(\Phi(x)\xi).
\end{align*}
Note that in the multi-dimensional case the matrix $\Phi(x)$ has to be transposed, i.e.\ the symbol of the solution is $\psi(\Phi^\top\!(x)\xi)$.
\end{proof}

Theorem \ref{sde-symbol} shows that it is possible to calculate the symbol, even if we do not know whether the solution process is a Feller process. However, most of the interesting results concerning the symbol of a process are restricted to Feller processes. Therefore it is interesting to have conditions guaranteeing that the solution of \eqref{sde} is a Feller process.
\begin{theorem}\label{feller-solution}
    Let $Z$ be a $d$-dimensional L\'evy processes such that $Z_0=0$. Then the solution of \eqref{sde} is a strong Markov process under each $\Pp^x$. Furthermore the solution process is time homogeneous and the transition semigroups coincide for every $\Pp^x$, $x\in \bbr^d$.
\end{theorem}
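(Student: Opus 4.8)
The plan is to obtain all three assertions from pathwise uniqueness of the solution to \eqref{sde} together with the stationarity and independence of the increments of the driving L\'evy process $Z$; the Feller property itself is not needed here. The guiding principle is the classical flow argument for SDEs (see e.g.\ \cite{protter}, Section V.6): the solution restarted at a later (possibly random) time is again a solution of the same equation, now driven by the time-shifted noise, and this shifted noise is an independent copy of $Z$.

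First I would fix $s\geq 0$ and set $Z^{(s)}_t:=Z_{s+t}-Z_s$. Since $Z$ has stationary and independent increments, $Z^{(s)}$ is a L\'evy process with the same law as $Z$, independent of $\overline{\cf_s}$. Splitting the stochastic integral in \eqref{sde} at $s$ and reparametrising $r\mapsto s+r$ shows that the shifted process $(X^x_{s+t})_{t\geq 0}$ satisfies $X^x_{s+t}=X^x_s+\int_0^t \Phi(X^x_{(s+r)-})\,dZ^{(s)}_r$, i.e.\ it solves \eqref{sde} driven by $Z^{(s)}$ with initial value $X^x_s$. Next I would use that, under the local Lipschitz and linear growth conditions, the solution depends measurably on the initial point and the driving path in a way that is independent of the starting distribution: there is a universal measurable map $\Xi_t(y,\cdot)$, built only from $\Phi$, such that the unique solution of $dY=\Phi(Y_-)\,dZ$, $Y_0=y$, equals $\Xi_t(y,Z)$. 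Pathwise uniqueness of \eqref{sde} then yields the flow identity $X^x_{s+t}=\Xi_t\!\left(X^x_s,Z^{(s)}\right)$.

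Now the three statements follow by a freezing (substitution) argument. Since $X^x_s$ is $\overline{\cf_s}$-measurable while $Z^{(s)}$ is independent of $\overline{\cf_s}$, for every bounded measurable $f$ we have $\Ee^x\!\left(f(X^x_{s+t})\mid \overline{\cf_s}\right)=\Ee\!\left(f(\Xi_t(y,Z^{(s)}))\right)\big|_{y=X^x_s}$. Because $Z^{(s)}$ has the same law as $Z$, the right-hand side equals $\Ee^y f(X^y_t)\big|_{y=X^x_s}=T_t f(X^x_s)$. This is simultaneously the Markov property, the time homogeneity (the expression $T_t f$ does not depend on $s$, again by stationarity of the increments), and the statement that the transition semigroup is the same under every $\Pp^x$ (since $\Xi$ and the law of $Z$ do not depend on $x$).

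Finally, to upgrade to the strong Markov property I would replace the deterministic time $s$ by a finite stopping time $\tau$ of $(\overline{\cf_t})$. The only additional input is the strong Markov property of the L\'evy process $Z$: on $\{\tau<\infty\}$ the process $Z^{(\tau)}_t:=Z_{\tau+t}-Z_\tau$ is again a L\'evy process with the same law as $Z$ and independent of $\overline{\cf_\tau}$. Repeating the flow decomposition and the freezing argument verbatim with $\tau$ in place of $s$ gives $\Ee^x\!\left(f(X^x_{\tau+t})\mid\overline{\cf_\tau}\right)=T_tf(X^x_\tau)$, which is the strong Markov property. I expect the main obstacle to be the careful construction of the universal measurable solution map $\Xi$ and the rigorous justification of the freezing step --- that is, showing that the pathwise solution really is a measurable functional of $(y,Z)$ common to all $\Pp^x$, and that substituting the random, $\overline{\cf_\tau}$-measurable value $X^x_\tau$ into $\Xi_t(\,\cdot\,,Z^{(\tau)})$ is legitimate on the enlarged stochastic basis of Section 3. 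Everything else is bookkeeping built on uniqueness and the increment structure of $Z$.
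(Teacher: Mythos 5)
Your proposal is correct and follows essentially the same route as the paper, which proves this theorem purely by citation to Protter \cite{protter} (Theorem V.32) and \cite{protter77} (Theorem (5.3)): the flow identity $X^x_{s+t}=\Xi_t(X^x_s,Z^{(s)})$ from pathwise uniqueness, the freezing argument using independence and stationarity of the increments, and the upgrade to stopping times via the strong Markov property of $Z$ are exactly the content of the cited proofs. The technical point you flag --- constructing the universal measurable solution map $\Xi$ and justifying the substitution of the $\overline{\cf_\tau}$-measurable initial value on the enlarged basis --- is likewise handled in those references, so your sketch is a faithful expansion of what the paper delegates to the literature.
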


\begin{proof}
    See Protter \cite{protter} Theorem V.32 and \cite{protter77} Theorem (5.3). Note that Protter states the theorem only for the special case where the components of the process are independent. However the independence is not used in the proof.
\end{proof}

Some lengthy calculations lead from Theorem \ref{feller-solution} directly to the following result which can be found in \cite{applebaum} Theorem 6.7.2 and, with an alternative proof, in \cite{schnurr-diss} Theorem 2.49.
\begin{corollary}
    If the coefficient $\Phi$ is bounded, the solution process $X^x_t$ of the SDE given by \eqref{sde} is a Feller process.
\end{corollary}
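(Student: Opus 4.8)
The plan is to take as given, from Theorem~\ref{feller-solution}, that the solution of \eqref{sde} is a normal, time-homogeneous strong Markov process whose transition semigroup $T_tu(x)=\Ee^x u(X_t)=\Ee\, u(X^x_t)$ does not depend on the starting law; this semigroup is automatically positivity preserving, sub-Markovian and $L^\infty$-contractive. What remains is to verify the two Feller conditions (F1) and (F2). The single tool I would rely on throughout is a pair of uniform estimates for the displacement
\[
    X^x_t-x=\int_0^t \Phi(X^x_{s-})\,dZ_s ,
\]
and the whole point of assuming $\Phi$ \emph{bounded}, say $\nnorm{\Phi}_\infty\leq M$, is that these estimates become uniform in the starting point $x$.

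First I would establish two a priori estimates. (i) \emph{Uniform small-time tightness of the displacement:} for every $\delta>0$,
\[
    \sup_{x\in\bbr^d}\Pp\Big(\sup_{s\leq t}\nnorm{X^x_s-x}>\delta\Big)\xrightarrow{t\downarrow 0}0 .
\]
To get this I would insert the L\'evy--It\^o decomposition \eqref{lid} of $Z$ into the stochastic integral and split the displacement into its drift, Brownian, compensated small-jump and big-jump parts. Because the integrand is bounded by $M$, the drift part is dominated by $M\abs{\ell}\,t$, the Brownian and compensated small-jump parts have angle brackets of order $O(t)$ uniformly in $x$ and are controlled by Doob's and the Burkholder--Davis--Gundy inequalities, and the big-jump part vanishes on the event that $Z$ has no jump of size $\geq 1$ before time $t$, an event of probability $1-O(t)$. (ii) \emph{Continuous dependence on the initial datum:} if $x_n\to x$ then $\sup_{s\leq t}\nnorm{X^{x_n}_s-X^x_s}\to 0$ in probability. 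Here I would subtract the two integral equations, apply the Lipschitz property of $\Phi$ together with the same maximal inequalities, and close the estimate by a Gronwall argument; since only local Lipschitz continuity is available, this step must be localised by stopping both processes on exit from a ball $B_\rho(x)$, the exit being irrelevant in the limit because the exit time tends to infinity as $\rho\to\infty$.

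From these two estimates the Feller properties follow quickly. For (F2), let $u\in C_\infty$; being uniformly continuous and bounded, for $\varepsilon>0$ there is $\delta>0$ with $\abs{u(y)-u(x)}\leq\varepsilon$ whenever $\nnorm{y-x}\leq\delta$, so
\[
    \abs{T_tu(x)-u(x)}\leq\varepsilon+2\nnorm{u}_\infty\,\Pp\big(\nnorm{X^x_t-x}>\delta\big),
\]
and estimate (i) bounds the right-hand side uniformly in $x$, giving $\nnorm{T_tu-u}_\infty\to 0$. For (F1), continuity of $T_tu$ follows from (ii): if $x_n\to x$ then $u(X^{x_n}_t)\to u(X^x_t)$ in probability, and since $u$ is bounded, dominated convergence yields $T_tu(x_n)\to T_tu(x)$. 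The vanishing at infinity is where boundedness of $\Phi$ is used a second time: rerunning the displacement estimate at a \emph{fixed} $t$ with the radius tending to infinity gives the spatial tightness $\lim_{K\to\infty}\sup_x\Pp(\nnorm{X^x_t-x}>K)=0$, so for $u$ with $\abs{u}<\varepsilon$ outside $B_R(0)$ one has $\abs{T_tu(x)}\leq\varepsilon+\nnorm{u}_\infty\,\Pp(\nnorm{X^x_t-x}\geq\abs{x}-R)\to 0$ as $\abs{x}\to\infty$.

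The main obstacle I anticipate is precisely the a priori estimates (i)--(ii), exactly the ``lengthy calculations'' alluded to above: one cannot in general pass to $L^p$-moments because the driving L\'evy process need not have any finite moment, so the big-jump part of \eqref{lid} must be handled through tightness and convergence in probability rather than through expectations, and the only-locally-Lipschitz coefficient forces a localisation in the Gronwall step. Once these uniform-in-$x$ bounds are in place, the verification of (F1) and (F2) is routine.
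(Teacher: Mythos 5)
Your outline is correct and is essentially the argument the paper relies on: the paper gives no proof of its own but defers (``some lengthy calculations'') to Applebaum, Theorem 6.7.2, and to \cite{schnurr-diss}, Theorem 2.49, where exactly your scheme is carried out --- the Markov and semigroup structure taken from Theorem \ref{feller-solution}, uniform-in-$x$ small-time tightness of the displacement via the L\'evy--It\^o decomposition \eqref{lid} for (F2), and continuity in probability in the initial point (localised Gronwall, big jumps handled by interlacing on the no-big-jump event rather than by moment bounds) plus spatial tightness for (F1). Your identification of where boundedness of $\Phi$ enters twice (uniformity in $x$ of the small-time estimate; tightness of $X^x_t-x$ at fixed $t$, which is what fails in the paper's Poisson counterexample in the subsequent Remark) matches the cited proofs, so there is nothing to flag.
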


\begin{remark}
    In \cite{schnurr-diss} it is shown that if $\Phi$ is not bounded the solution of \eqref{sde} may fail to be a Feller process. Consider the stochastic integral equation
    $$
        X_t=x-\int_0^t X_{s-} \,dN_s
    $$
    where $N=(N_t)_{t\geq 0}$ is a standard Poisson process. The solution process starts in $x$, stays there for an exponentially distributed waiting time (which is independent of $x$) and then jumps to zero, where it remains forever. There exists a time $t_0>0$ for which $\Pp^x(X_{t_0}=x) = \Pp^x(X_{t_0}=0) = 1/2$. For a function $u\in C_c(\bbr)$ with the property $u(0)=1$ we obtain
    $$
        \Ee^x(u(X_{t_0})) = \frac{1}{2} \hspace{10mm} \text{ for every } x \notin\text{supp}u.
    $$
    Therefore $T_{t_0} u$ does not vanish at infinity.
\end{remark}

Next we show that the solution of the SDE satisfies condition \eqref{rich} if $\Phi$ is bounded.
\begin{theorem}\label{thm:rich}
Let $\Phi$ be bounded and locally Lipschitz continuous. In this case
the solution $X^x_t$ of the SDE
$$
    X_t=x+\int_0^t \Phi(X_{s-}) \,dZ_s, \quad x\in\bbr^d,
$$
fulfills condition \eqref{rich}, i.e.\ the test functions are contained in the domain $D(A)$ of the generator $A$.
\end{theorem}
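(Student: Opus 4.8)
The plan is to prove that every $u\in C_c^\infty(\bbr^d)$ belongs to $D(A)$ by exhibiting explicitly the operator to which $(T_tu-u)/t$ must converge, namely the integro-differential operator attached to the symbol $\psi(\Phi^\top\!(x)\xi)$ of Theorem \ref{sde-symbol},
\begin{align*}
  \mathcal{A}u(x)
  &= (\Phi(x)\ell)^\top\nabla u(x) + \tfrac 12 \sum_{j,k=1}^d \big(\Phi(x)Q\Phi(x)^\top\big)^{jk}\partial_j\partial_k u(x)\\
  &\qquad + \int_{y\neq 0}\Big(u\big(x+\Phi(x)y\big) - u(x) - \big(\Phi(x)y\big)^\top\nabla u(x)\,\I_{\{\abs{y}<1\}}\Big)\,N(dy),
\end{align*}
and then showing that $(T_tu-u)/t\to\mathcal{A}u$ in $\nnorm{\cdot}_\infty$. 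Here $(\ell,Q,N)$ is the L\'evy triplet of the driving process $Z$.

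First I would repeat the It\^o computation from the proof of Theorem \ref{sde-symbol}, now applied to $u$ in place of $e_\xi$ and without stopping. Since $u$ and all its derivatives are bounded and $\Phi$ is bounded, every integrand arising against the martingale part of the L\'evy--It\^o decomposition \eqref{lid} is bounded; hence those stochastic integrals are genuine martingales and vanish under $\Ee^x$, while the sum over the jumps combines with the big-jump term and is rewritten through its compensator $N(dy)\,ds$, exactly as in the treatment of \eqref{jumps}. Collecting the drift, the continuous-quadratic-variation and the compensated jump contributions yields the Dynkin-type identity
$$
  \Ee^x u(X_t) - u(x) = \Ee^x\int_0^t \mathcal{A}u(X_s)\,ds,\qquad t\geq 0,\ x\in\bbr^d.
$$

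The decisive second step is to verify that $\mathcal{A}u\in C_\infty(\bbr^d)$. Continuity follows from the continuity of $\Phi$ together with dominated convergence, the integrand being dominated by a multiple of $1\wedge\abs{y}^2$ as in Lemma \ref{lem:folge}, which is $N$-integrable. The point where boundedness of $\Phi$ enters essentially is the decay at infinity: writing $\abs{\Phi}\leq M$ and $\operatorname{supp}u\subset B_\rho(0)$, for $\abs{x}>\rho$ the local terms vanish because $\nabla u(x)=\partial_j\partial_k u(x)=0$, and the nonlocal term reduces to $\int u(x+\Phi(x)y)\,N(dy)$; but $u(x+\Phi(x)y)\neq 0$ forces $\abs{\Phi(x)y}\geq\abs{x}-\rho$, hence $\abs{y}\geq(\abs{x}-\rho)/M$, so this term is bounded by $\nnorm{u}_\infty\,N(\{\abs{y}\geq(\abs{x}-\rho)/M\})\to 0$ as $\abs{x}\to\infty$, since $N$ is finite away from the origin.

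Once $\mathcal{A}u\in C_\infty$ is established the argument closes quickly. By Fubini (the integrand is bounded) the identity above reads $T_tu(x)-u(x)=\int_0^t T_s(\mathcal{A}u)(x)\,ds$, so that $(T_tu-u)/t=\tfrac 1t\int_0^t T_s(\mathcal{A}u)\,ds$. Because $\Phi$ is bounded the solution is a Feller process by the Corollary preceding this theorem, whence $(T_s)$ is strongly continuous on $C_\infty$ by condition (F2) and $\nnorm{T_s(\mathcal{A}u)-\mathcal{A}u}_\infty\to 0$ as $s\downarrow 0$; therefore
$$
  \Big\|\tfrac 1t\int_0^t T_s(\mathcal{A}u)\,ds - \mathcal{A}u\Big\|_\infty
  \leq \frac 1t\int_0^t \nnorm{T_s(\mathcal{A}u)-\mathcal{A}u}_\infty\,ds \xrightarrow{t\downarrow 0} 0,
$$
which is exactly uniform convergence of $(T_tu-u)/t$ to $\mathcal{A}u$, giving $u\in D(A)$ and hence $C_c^\infty\subset D(A)$. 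I expect the main obstacle to be the second step, the membership $\mathcal{A}u\in C_\infty$ and in particular the decay of the nonlocal term at infinity, since this is precisely the property that can fail when $\Phi$ is unbounded, as the Poisson counterexample in the Remark above shows; the remaining steps are routine but require care in justifying that the martingale parts vanish and in passing to the compensator.
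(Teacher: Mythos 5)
Your proposal is correct, and its core coincides with the paper's proof: the paper likewise applies It\^o's formula to $u$, uses the L\'evy--It\^o decomposition \eqref{lid} to kill the martingale parts (checking, via a Taylor estimate using $\nnorm{\Phi}_\infty<\infty$, that the jump integrand lies in Ikeda--Watanabe's class $F_p^1$ so that one may pass to the compensator $N(dy)\,ds$), and arrives at exactly your operator $\mathcal{A}u$ as the limit of the difference quotients. Where you genuinely diverge is the final upgrade to uniform convergence. The paper only proves \emph{pointwise} convergence of $(T_tu(x)-u(x))/t$ to $\mathcal{A}u(x)$ (via its Lemma \ref{lem:limits}), asserts without detail that the limit lies in $C_\infty$, and then invokes Sato \cite{sato}, Lemma 31.7, which says that for a Feller semigroup pointwise convergence of the difference quotients to a $C_\infty$-function already forces $u\in D(A)$. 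You instead integrate the Dynkin-type identity, obtain $T_tu-u=\int_0^t T_s(\mathcal{A}u)\,ds$ by Fubini, and deduce uniform convergence directly from strong continuity (F2) of the Feller semigroup; this is more self-contained (no external lemma) at the price of needing the full identity $\Ee^x u(X_t)-u(x)=\Ee^x\int_0^t\mathcal{A}u(X_s)\,ds$ for all $t$, rather than just its small-$t$ asymptotics. Both routes use the Feller property (hence boundedness of $\Phi$, via the preceding Corollary) at this stage. A further point in your favour: you make explicit the decay of the nonlocal term at infinity --- reducing it for $\abs{x}>\rho$ to $\int u(x+\Phi(x)y)\,N(dy)$ and bounding it by $\nnorm{u}_\infty N\big(\{\abs{y}\geq(\abs{x}-\rho)/M\}\big)$ --- which is precisely the step the paper compresses into the single sentence that the limit ``vanishes at infinity'', and which is where boundedness of $\Phi$ is essential, as the Poisson counterexample shows.
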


\begin{proof}
Again we only give the proof in dimension one. The multi-dimensional version is similar. Let $u\in C_c^\infty(\bbr)$. By It\^o's formula we get
\begin{align*}
    D_t
    &:= \frac{\Ee^x u(X_t) - u(x)}{t}\\
    &=\frac 1t\Ee^x (u(X_t)-u(x))\\
    &=\frac 1t\,\Ee^x \bigg( \int_{0+}^t u'(X_{s-}) \,dX_s +\frac{1}{2} \int_{0+}^t u''(X_{s-}) \,d[X,X]_s^c \\
    &\qquad+\sum_{0<s\leq t} \big( u(X_s) -u(X_{s-}) - u'(X_{s-}) \Delta X_s\big)\bigg).
\end{align*}
Since $X_t=x+\int_0^t\Phi(X_{s-})\,dZ_s$ we obtain
\begin{align*}
    D_t
    &=\frac 1t\,\Ee^x \bigg(\int_{0+}^t u'(X_{s-}) \Phi(X_{s-}) \,dZ_s +\frac{1}{2} \int_{0+}^t u''(X_{s-}) \Phi(X_{s-})^2 \,d[Z,Z]_s^c\\
    &\quad + \int_{y\neq 0} \int_0^t  \Big(u\left(X_{s-}+\Phi(X_{s-})y\right)-u(X_{s-})-u'(X_{s-})\Phi(X_{s-})y\Big) \mu^Z(\cdot\,;ds,dy)\bigg)
\end{align*}
where $\mu^Z$ is the random measure given by the jumps of the L\'evy process $Z$. Next we use the L\'evy-It\^o decomposition $Z$ in the first term. The expected value of the integral with respect to the martingale part of $Z$ is zero, since the integral
$$
    \int_0^t u'(X_{s-})\Phi(X_{s-}) \, d\left(\Sigma W_t + \int_{[0,t] \times \{\abs{y}<1\}} y\,\left(\mu^Z(ds,dy)-ds\,N(dy)\right)\right)
$$
is an $L^2$-martingale. Therefore we obtain
\begin{align*}
    D_t
    &=\frac 1t\,\Ee^x\int_{0+}^t u'(X_{s-})\Phi(X_{s-})  \,d\left(\ell t +\sum_{0<r\leq s} \Delta Z_r \I_{\{\abs{\Delta Z_r}\geq 1 \}} \right) \\
    &\quad + \frac{1}{2}\frac 1t\,\Ee^x \int_{0+}^t u'' (X_{s-}) \Phi(X_{s-}) \,d(\Sigma^2 s)\\
    &\quad + \frac 1t\,\Ee^x \int_{y\neq 0} \int_0^t \Big(  u\left(X_{s-}+\Phi(X_{s-})y\right)- u(X_{s-})-u'(X_{s-})\Phi(X_{s-})y \Big) \mu^Z(\cdot\,;ds,dy).
\end{align*}
We write the jump part of the first term as an integral with respect to $\mu^Z$ and add it to the third term. The integrand
$$
    H(\cdot\,;s,y) := u\big(X_{s-}+\Phi(X_{s-})y\big)-u(X_{s-})-u'(X_{s-})\Phi(X_{s-})y \, \I_{\{\abs{y}<1\}}
$$
is in the class $F_p^1$ of Ikeda and Watanabe, \cite{ikedawat} Section II.3, i.e.\ it is predictable and
$$
  \Ee\left( \int_0^t \int_{y\neq 0} \abs{H(\cdot\,;s,y)}\, \nu(\cdot,ds,dy)\right) <\infty
$$
where $\nu$ denotes the compensator of $\mu^X$. Indeed, the measurability criterion is fulfilled because of the left-continuity of $H(\cdot\,;s,\cdot)$, the integrability follows from
\begin{align*}
    \big| u\big( &X_{s-}+\Phi(X_{s-})y\big)-u(X_{s-})-u'(X_{s-})\Phi(X_{s-})y\, \I_{\{\abs{y}<1\}}\big|  \\
    &\leq \abs{\left\{u\big(X_{s-}+\Phi(X_{s-})y\big) -u(X_{s-})- u'(X_{s-}) \Phi(X_{s-})y\right\}\I_{\{\abs{y}<1\}} } + 2 \nnorm{u}_\infty\I_{\{\abs{y}\geq 1\}}\\
    &\leq \frac{1}{2}\,y^2\,\Phi(X_{s-})^2\nnorm{u''}_\infty \I_{\{\abs{y} < 1\}}  + 2 \nnorm{u}_\infty \I_{\{\abs{y}\geq 1\}}\\
    &\leq \left(2\vee \nnorm{\Phi}_\infty^2\right) \left(y^2 \wedge 1\right) \left(\nnorm{u}_\infty + \nnorm{u''}_\infty\right)
\end{align*}
where we used a Taylor expansion for the first term. Therefore $H \in F_p^1$ and we can, `under the expectation', integrate with respect to the compensator of the random measure instead of the measure itself, see \cite{ikedawat} Section II.3. Thus,
\begin{align*}
    D_t
    &=\frac 1t\,\Ee^x\int_{0+}^t u'(X_{s-}) \Phi(X_{s-})\ell  \,ds +\frac{1}{2t}\,\Ee^x \int_{0+}^t u'' (X_{s-})  \Phi(X_{s-})\Sigma^2 \,ds\\
    &\quad + \frac 1t\,\Ee^x \int_{y\neq 0} \int_0^t \left( u(X_{s-}+\Phi(X_{s-})y)-u(X_{s-})-u'(X_{s-})\Phi(X_{s-})y \I_{\{\abs{y}<1\}} \right) \,ds\,N(dy).
\end{align*}
Since we are integrating with respect to Lebesgue measure and since
the paths of a c\`adl\`ag process have only countably many jumps we
get
\begin{align*}
    D_t
    &=\frac 1t\,\Ee^x\int_{0}^t u'(X_{s}) \Phi(X_{s})\ell  \,ds + \frac{1}{2t}\,\Ee^x \int_{0}^t u'' (X_{s})  \Phi(X_{s})\Sigma^2 \,ds\\
    &\quad + \frac 1t\,\Ee^x  \int_0^t \int_{y\neq 0}\left(u(X_{s}+\Phi(X_{s})y)-u(X_{s})-u'(X_{s})\Phi(X_{s})y\I_{\{\abs{y}<1\}} \right) N(dy)\,ds.
\end{align*}
The change of the order of integration is again justified by the estimate of $\abs{H}$. By Lemma \ref{lem:limits} we see that
\begin{align*}
    \frac{\Ee^x u(X_t) - u(x)}{t}
    &\xrightarrow{t \downarrow 0} \ell u'(x) \Phi(x) +\frac{1}{2} \Sigma^2 u''(x) \Phi(x)^2 \\
    &\qquad+ \int_{y\neq 0} \left( u(x+\Phi(x)y)-u(x) -u'(x)\Phi(x)y \cdot \I_{\{\abs{y}<1\}} \right) N(dy).
\end{align*}
As a function of $x$, the limit is continuous and vanishes at infinity. Therefore the test functions are contained in the domain, cf.\ Sato \cite{sato} Lemma 31.7.
\end{proof}

\begin{remark}
In the one-dimensional case the following weaker condition is
sufficient to guarantee that the test functions are contained in the
domain of the solution. Let $\Phi$ be locally Lipschitz continuous
satisfying \eqref{growth} and assume that
\begin{gather}\label{weakercond}
    x\mapsto \sup_{\lambda \in ]0,1[}\frac{1}{x+\lambda \Phi(x)} \in C_\infty(\bbr).
\end{gather}
The products $u'\Phi$ and $u''\Phi$ are bounded for every continuous $\Phi$, because $u$ has compact support. The only other step in the proof of Theorem \ref{thm:rich} which requires the boundedness of $\Phi$ is the estimate of $\abs{H}$ in order to get $H\in F_p^1$.

However, \eqref{weakercond} implies that for every $r>0$ there exists some $R>0$ such that
\begin{gather}\label{weakercond2}
    \abs{x+\lambda \Phi(x)} > r \quad\text{for all\ \ } \abs x > R,\; \lambda\in ]0,1[.
\end{gather}
Therefore, see the proof of Theorem \ref{thm:rich}, we can use Taylor's formula to get
\begin{align*}
    |H(\cdot\,;x,y)|  \I_{\{\abs{y}<1\}}
    &= \abs{\left\{ u\big(X_{s-}+\Phi(X_{s-})y\big)-u(X_{s-})-u'(X_{s-})\Phi(X_{s-})y\right\} \I_{\{\abs{y}<1\}}} \\
    &\leq \abs{\frac{1}{2}\,y^2\,\Phi\big(X_{s-}\big)^2 \,u''\big(X_{s-} +\vartheta y \Phi(X_{s-})\big) \I_{\{\abs{y}<1\}}}
\end{align*}
for some $\vartheta \in ]0,1[$. Set $\lambda:=\vartheta\cdot y$ and pick $r$ such that $\text{supp}u''\subset \overline{B_{r}(0)}$; then \eqref{weakercond2} shows that $\Phi(X_{s-})^2\, u''(X_{s-}+\vartheta y \Phi(X_{s-}))$ is bounded.
\end{remark}

Combining our results, we obtain the following existence result for Feller processes.
\begin{corollary}\label{feller-existence}
    For every negative definite symbol having the following structure
$$
    p(x,\xi)=\psi(\Phi^\top\!(x)\xi) \,
$$
    where $\psi:\bbr^n\to \bbc$ is a continuous negative definite function and $\Phi: \bbr^d \to \bbr^{d \times n}$ is bounded and Lipschitz continuous, there exists a unique Feller process $X^x$. The domain $D(A)$ of the infinitesimal generator $A$ contains the test functions $C_c^\infty = C_c^\infty(\bbr^n)$ and $A|_{C_c^\infty}$ is a pseudo-differential operator with symbol $-p(x,\xi)$.
\end{corollary}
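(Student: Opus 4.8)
The plan is to assemble the corollary from the machinery already built up in this section; essentially no new computation is needed, only a careful check that each cited result applies with the present hypotheses. First I would produce the driving process: given the continuous negative definite function $\psi:\bbr^n\to\bbc$, the L\'evy--Khintchine representation associates to it a L\'evy triplet $(\ell,Q,N)$ and hence a L\'evy process $Z=(Z_t)_{t\geq 0}$ in $\bbr^n$ with $Z_0=0$ and characteristic exponent $\psi$. Next I would verify that a bounded, Lipschitz $\Phi:\bbr^d\to\bbr^{d\times n}$ meets the standing hypotheses of \eqref{sde}: Lipschitz continuity gives local Lipschitz continuity, and boundedness gives the linear growth bound \eqref{growth} with $K=\nnorm{\Phi}_\infty^2$, since $\abs{\Phi(x)}^2\leq\nnorm{\Phi}_\infty^2\leq\nnorm{\Phi}_\infty^2(1+\abs{x}^2)$. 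Consequently the SDE $dX_t^x=\Phi(X_{t-}^x)\,dZ_t$, $X_0^x=x$, has a unique conservative strong solution $X^x$.

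With the solution in hand I would turn on the Feller theory. Theorem \ref{feller-solution} shows that $X^x$ is a time-homogeneous strong Markov process whose transition semigroup is independent of the starting point, and the Corollary deduced from it, using the boundedness of $\Phi$, upgrades this to the Feller property (F1)--(F2). Theorem \ref{thm:rich}, whose hypotheses (bounded, locally Lipschitz $\Phi$) are exactly those assumed here, then yields condition \eqref{rich}, that is $C_c^\infty\subset D(A)$. At this stage $X^x$ is conservative, Feller, and satisfies \eqref{rich} --- precisely the setting of Theorem \ref{theorem:symbol}.

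It remains to identify the symbol. The proof of Theorem \ref{thm:rich} does more than establish \eqref{rich}: it exhibits, for every $u\in C_c^\infty$, the explicit integro-differential form of $Au(x)$, whose coefficients are $\ell\,\Phi(x)$, the diffusion part $\Phi(x)^2Q$, and the image of $N$ under $y\mapsto\Phi(x)y$ (in dimension one, with the obvious matrix analogue in general). Reading off the corresponding continuous negative definite symbol gives exactly $\psi(\Phi^\top\!(x)\xi)=p(x,\xi)$, which is moreover continuous in $x$ because $\psi$ and $\Phi$ are continuous. This is the analytic symbol appearing in \eqref{pseudo}, and by Theorem \ref{sde-symbol} it coincides with the probabilistic symbol of $X^x$; Theorem \ref{theorem:symbol}, whose continuity hypothesis is now verified, confirms that the two notions of symbol agree. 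Hence $A|_{C_c^\infty}$ is a pseudo-differential operator with symbol $-p(x,\xi)$.

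The step I expect to require the most care is the uniqueness assertion, since the introduction explicitly warns that a symbol of the form \eqref{lkfx} need not determine a process in general. The point to make precise is that uniqueness here is inherited from the \emph{strong} uniqueness of the SDE rather than from any abstract well-posedness at the level of symbols or generators. Concretely, the law of the solution $X^x$ is a measurable functional of the path of the driver $Z$, and the law of $Z$ is completely determined by $\psi$ via L\'evy--Khintchine; hence the finite-dimensional distributions of $X^x$, and therefore the transition semigroup $T_tu(x)=\Ee^x u(X_t)$, depend only on $\psi$ and $\Phi$, not on the particular realization of the driving L\'evy process. This pins down the Feller process uniquely. I would take care to phrase the conclusion as uniqueness of the transition semigroup (equivalently, of the law of the process), which is all that is meant here.
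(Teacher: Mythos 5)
Your proposal is correct and follows exactly the route the paper intends: the paper offers no separate proof beyond ``combining our results,'' meaning precisely the chain you assemble (L\'evy--Khintchine to construct $Z$ from $\psi$, boundedness and Lipschitz continuity of $\Phi$ giving \eqref{growth} and a unique conservative strong solution, the Feller property via Theorem \ref{feller-solution} and its corollary, condition \eqref{rich} via Theorem \ref{thm:rich}, and the symbol identification via Theorems \ref{sde-symbol} and \ref{theorem:symbol}, whose continuity hypothesis holds since $\psi$ and $\Phi$ are continuous). Your added care on the uniqueness claim --- reading it as uniqueness of the transition semigroup inherited from pathwise uniqueness of the SDE and uniqueness in law of $Z$, rather than abstract well-posedness at the symbol level --- is a sensible and correct elaboration of what the paper leaves implicit.
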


We close this section by mentioning that in a certain sense our investigations of the SDE \eqref{sde} cannot be generalized. For this we cite the following theorem by Jacod and Protter \cite{jacodprotter} which is a converse to our above considerations.

\begin{theorem}
    Let $(\Omega, \cf, (\cf_t)_{t\geq 0}, \Pp)$ be a filtered probability space with a semimartingale $Z$. Let $f\in B(\bbr)$ such that $f$ is never zero and is such that for every $x\in \bbr$ the equation \eqref{sde} has a unique (strong) solution $X^x$. If each of the processes $X^x$ is a time homogeneous Markov process with the same transition semigroup then $Z$ is a L\'evy process.
\end{theorem}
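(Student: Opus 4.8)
The plan is to use the hypothesis that $f$ is nowhere zero to \emph{invert} the equation, recovering the driving noise $Z$ as one and the same pathwise functional of each solution $X^x$, and then to push the Markov property of the family $(X^x)$ through this inversion. Since $dX_s^x = f(X^x_{s-})\,dZ_s$ and $1/f$ is well defined, I would first write, for every starting point $x$,
\[
    Z_t = \int_{0+}^t \frac{1}{f(X^x_{s-})}\,dX^x_s =: \Psi(X^x)_t,
\]
where $\Psi(w)_t := \int_{0+}^t \big(1/f(w_{s-})\big)\,dw_s$ is a fixed measurable functional on c\`adl\`ag paths. The decisive observation is that $\Psi(X^x)=Z$ holds almost surely \emph{for every $x$ simultaneously}, because the integral telescopes to $\int_{0+}^t dZ_s = Z_t$ irrespective of $x$.

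Next I would transport the Markov property. Fix $t\geq 0$ and let $\theta_t$ denote the time shift $w\mapsto w_{t+\cdot}$ on path space, so that
\[
    (Z_{t+h}-Z_t)_{h\geq 0} = \Psi(\theta_t X^x).
\]
Because each $X^x$ is a time-homogeneous Markov process and all of them share the \emph{same} transition semigroup, the conditional law of the shifted path $\theta_t X^x$ given $\cf_t$ equals the law of the solution restarted at the current position $X^x_t$, i.e.\ the law of $X^y$ taken at $y=X^x_t$. Applying any bounded measurable $G$ and using $\Psi(X^y)=Z$ for every $y$, I obtain
\[
    \Ee\big(G(\Psi(\theta_t X^x))\,\big|\,\cf_t\big) = \Ee\big(G(\Psi(X^y))\big)\big|_{y=X^x_t} = \Ee\big(G(Z)\big).
\]
The right-hand side is deterministic and depends neither on $t$ nor on $X^x_t$. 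This single identity simultaneously shows that the post-$t$ increment process is independent of $\cf_t$ (hence increments over disjoint time intervals are independent) and that it has the same law as $(Z_h)_{h\geq 0}$ (hence the increments are stationary).

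To finish, I would note that $Z$, being a semimartingale, is c\`adl\`ag, so $Z_h\to Z_0=0$ almost surely as $h\downarrow 0$; combined with stationarity this gives $Z_{t+h}-Z_t\to 0$ in probability, i.e.\ $Z$ is stochastically continuous. A stochastically continuous process with stationary and independent increments starting at $0$ is a L\'evy process, which is the assertion.

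The step I expect to be the main obstacle is making the inversion rigorous: although $f$ is bounded and never zero, $1/f$ need not be locally bounded along the range of the path (a bounded nowhere-zero measurable function may satisfy $\inf\abs{f}=0$ there), so the a priori existence of $\Psi(X^x)$ as a stochastic integral must be argued. This is saved by the fact that the integral is known in advance to equal the semimartingale $Z$; the genuine work is to exhibit $\Psi$ as a universally measurable functional of the path of $X^x$ alone, so that it really commutes with conditioning on $\cf_t$ and with the shift $\theta_t$. For this one appeals to the measurability theory of stochastic integrals and, in the second step, to the path-functional version of the Markov property, which is available here because the solutions are c\`adl\`ag.
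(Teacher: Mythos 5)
Your proposal is, in essence, the original argument of Jacod and Protter, which the paper does not reproduce but simply cites as the proof of this theorem: invert the equation by associativity of stochastic integration (since $\frac{1}{f(X_{-})}\cdot f(X_{-})=1$ is trivially $Z$-integrable, $\frac{1}{f(X_{-})}$ is $X$-integrable and $Z_t=\int_{0+}^t \frac{1}{f(X_{s-})}\,dX_s$, which also settles the local-boundedness worry you raise), then push the common transition semigroup through this path functional to get stationary independent increments, and finish with stochastic continuity from the c\`adl\`ag property. The measurability of the integral as a functional of the path, which you correctly single out as the real technical content, is exactly the point dealt with in the cited note, along the lines you indicate.
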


\section{Examples}

In the case $d=1$ we obtain results for various processes which are used most often in applications:

\begin{corollary}
Let $Z^1,\ldots,Z^n$ be independent L\'evy processes with symbols (i.e.\ characteristic exponents) $\psi_1,\ldots,\psi_n$ and let $\Phi^1,\ldots,\Phi^n$ be bounded and Lipschitz continuous functions on $\bbr$. Then the SDE
\begin{equation}\begin{aligned}
    dX_t^x &= \Phi^1(X_{t-}^x) \,dZ^1_t +\cdots+ \Phi^n(X_{t-}^x) \,dZ^n_t\\
    X_0^x  &= x
\end{aligned}\end{equation}
has a unique solution $X^x$ which is a Feller process and admits the symbol
$$
    p(x,\xi)=\sum_{j=1}^n \psi_j(\Phi^j(x)\xi), \quad x,\xi\in\bbr.
$$
\end{corollary}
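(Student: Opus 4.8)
The plan is to recognize this statement as a direct specialization of Theorem \ref{sde-symbol}, once the $n$ scalar driving processes are bundled into a single vector-valued L\'evy process. First I would set $Z := (Z^1,\ldots,Z^n)^\top$, an $\bbr^n$-valued process, and collect the coefficients into the row vector $\Phi(x) := (\Phi^1(x),\ldots,\Phi^n(x)) \in \bbr^{1\times n}$. With this notation the system rewrites exactly as the matrix SDE \eqref{sde}, namely $dX_t^x = \Phi(X_{t-}^x)\,dZ_t$ with $d=1$, so that all the machinery of Section 3 becomes available verbatim.

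Second, I would verify that the hypotheses of the earlier results are met. Since each $\Phi^j$ is bounded, $\abs{\Phi(x)}^2 = \sum_{j=1}^n \abs{\Phi^j(x)}^2 \leq \sum_{j=1}^n \nnorm{\Phi^j}_\infty^2$ is bounded (in particular the growth condition \eqref{growth} holds), and the componentwise Lipschitz property of the $\Phi^j$ makes $\Phi$ Lipschitz as a map into $\bbr^{1\times n}$. Hence the SDE has a unique conservative strong solution, and by the Corollary following Theorem \ref{feller-solution} the boundedness of the coefficient guarantees that $X^x$ is a Feller process. This disposes of the existence, uniqueness, and Feller assertions with no further work.

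The one genuinely new ingredient is identifying the characteristic exponent of the bundled process $Z$. Here I would invoke the independence of $Z^1,\ldots,Z^n$: the vector $Z$ inherits stationary and independent increments and stochastic continuity precisely because its components are independent L\'evy processes, so $Z$ is itself an $\bbr^n$-valued L\'evy process and Theorem \ref{sde-symbol} applies. Its characteristic function factorizes, giving for $\zeta = (\zeta_1,\ldots,\zeta_n) \in \bbr^n$
$$
    \Ee^0 e^{i Z_t^\top \zeta} = \prod_{j=1}^n \Ee^0 e^{i Z_t^j \zeta_j} = \prod_{j=1}^n e^{-t\,\psi_j(\zeta_j)} = e^{-t\sum_{j=1}^n \psi_j(\zeta_j)},
$$
so the characteristic exponent of $Z$ is $\psi(\zeta) = \sum_{j=1}^n \psi_j(\zeta_j)$.

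Finally, I would substitute into the conclusion of Theorem \ref{sde-symbol}. Since $\Phi^\top\!(x)\xi = (\Phi^1(x)\xi,\ldots,\Phi^n(x)\xi)^\top$ for the scalar co-variable $\xi \in \bbr$, the symbol is
$$
    p(x,\xi) = \psi(\Phi^\top\!(x)\xi) = \sum_{j=1}^n \psi_j(\Phi^j(x)\xi),
$$
which is the claimed formula. I do not anticipate any serious obstacle: the argument is essentially bookkeeping together with the factorization step, and the only point requiring a moment's care is justifying that independence upgrades the family $(Z^j)_{1\leq j\leq n}$ to a single $\bbr^n$-valued L\'evy process, so that Theorem \ref{sde-symbol} is legitimately applicable.
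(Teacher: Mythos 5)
Your proposal is correct and matches the paper's own proof, which likewise rewrites the system as the single matrix SDE $dX_t = (\Phi^1,\ldots,\Phi^n)(X_{t-})\,d(Z^1_t,\ldots,Z^n_t)^\top$ and invokes the multi-dimensional case of Theorem \ref{sde-symbol}. You merely spell out the details the paper leaves implicit -- that independence makes the bundled process an $\bbr^n$-valued L\'evy process with exponent $\psi(\zeta)=\sum_{j=1}^n\psi_j(\zeta_j)$, and that boundedness of the coefficient yields the Feller property via the corollary to Theorem \ref{feller-solution} -- all of which is accurate.
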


\begin{proof}
This follows directly from the multi-dimensional case of Theorem 3.1 if one writes the SDE \eqref{sde} in the form
\begin{align*}
    dX_t
    &=(\Phi^1,\ldots,\Phi^n)(X_{t-}) \,d\!\begin{pmatrix} Z^1_t \\ \vdots \\Z^n_t\end{pmatrix} \\
    X_0^x
    &=x.
\qedhere\end{align*}
\end{proof}

\begin{example}[L\'evy  plus Lebesgue]
Let $\Phi,\Psi:\bbr \to \bbr$ be bounded and Lipschitz continuous and $(Z_t)_{t\geq 0}$ be a one-dimensional L\'evy  process with symbol $\psi$. The unique solution process $X^x$ of the SDE
\begin{equation}\begin{aligned}
  dX_t^x
  &= \Phi(X_{t-}^x) \,dZ_t + \Psi(X_{t-}^x) \,dt\\
  X_0^x
  &= x
\end{aligned}\end{equation}
has the symbol $p(x,\xi)=\psi(\Phi(x)\xi)-i(\Psi(x) \xi)$. Note that the driving processes $dX_t^x$ and $dt$ are independent since the latter is deterministic.
\end{example}

\begin{example}[Wiener plus Lebesgue]
Let $\Phi,\Psi:\bbr \to \bbr$ be bounded and Lipschitz continuous and $(W_t)_{t\geq 0}$ be a one-dimensional Brownian motion. The unique solution process $X^x$ of the SDE
\begin{equation}\begin{aligned}
  dX_t^x &= \Phi(X_{t-}^x) \,dW_t + \Psi(X_{t-}^x) \,dt\\
  X_0^x  &= x
\end{aligned}\end{equation}
has the symbol $p(x,\xi)=\abs{\Phi(x)}^2 \abs{\xi}^2 - i \,\Psi(x) \xi$.
\end{example}

\begin{example}[Symmetric $\alpha$-stable] Let $(Z^j_t)_{t\geq 0}$, $j=1,\ldots, n$, be independent symmetric one-dimensional $\alpha_j$-stable L\'evy processes, i.e.\ the characteristic exponents are of the form $\psi(\xi)=\abs{\xi}^{\alpha_j}$ with $\alpha_j \in (0,2]$, and let $\Phi_j:\bbr \to \bbr$ be bounded and Lipschitz continuous. The unique solution process $X^x$ of the SDE
\begin{equation}\begin{aligned}
  dX_t^x
  &= \Phi_1(X_{t-}^x) \,dZ^1_t +\cdots+\Phi_n(X_{t-}^x) \,dZ^n_t\\
  X_0^x
  &= x
\end{aligned}\end{equation}
has the symbol $p(x,\xi)= \sum_{j=1}^n \abs{\Phi_j(x)}^{\alpha_j} \cdot \abs{\xi}^{\alpha_j}$.
\end{example}

\section{Some Applications}

Using the symbol of a Feller Process it is possible to introduce so-called indices which are generalizations of the Blumenthal-Getoor index $\beta$ for a L\'evy process, see e.g.\ \cite{schilling98,jac-sch-survey}. These indices can be used to obtain results about the global behaviour and the paths of the process.

\begin{remark}\label{rem:gdef}
    It is shown in \cite{jac-sch-lkf} Lemma 5.2,  see also Lemma \ref{l1} in the Appendix,  that
    $$
        \frac{\abs{y}^2}{1+\abs{y}^2} = \int_{\bbr^d\setminus\{0\}} \left(1 - \cos(y^\top\rho)\right) g_d(\rho) \,d\rho
    $$
    where
    \begin{gather} \label{gdef}
     g_d(\rho)=\frac{1}{2} \int_0^\infty (2\pi \lambda)^{-d/2} e^{-\abs{\rho}^2 / (2\lambda)} e^{-\lambda/2} \,d\lambda,
     \quad \rho\in\bbr^d\setminus\{0\}.
    \end{gather}
    It is straightforward to see that $\int_{\bbr^d\setminus\{0\}} |\rho|^j\,g(\rho)\,d\rho < \infty$ for all $j=0,1,2,\ldots$
\end{remark}

Let $p(x,\xi)$, $x,\xi\in\bbr^d$, be the symbol of a Markov process. Set
\begin{equation}\label{H-function}
    H(x,R):= \sup_{\abs{y-x}\leq 2R} \sup_{\abs{{\mathsf{e}}}\leq 1} \left( \int_{-\infty}^\infty \Re p\left(y, \frac{\rho {\mathsf{e}}}{R}  \right) g(\rho) \,d \rho + \abs{ p\left(y,\frac{{\mathsf{e}}}{R}\right)}\right)
\end{equation}
with the function $g=g_1$ from Remark \ref{rem:gdef}. If the symbol satisfies the following sector-type condition, $\abs{\Im p(x,\xi)}\leq c_0 \cdot \Re p(x,\xi)$, we set
\begin{equation}\label{h-function}
    h(x,R):= \inf_{\abs{y-x}\leq 2R} \sup_{\abs{{\mathsf{e}}}\leq 1} \Re p\left(y,\frac{{\mathsf{e}}}{4 \kappa R} \right)
\end{equation}
where $\kappa:= ( 4 \arctan (1 / 2c_0))^{-1}$.

\begin{definition} \label{def:indices}
    Let $p(x,\xi)$, $x,\xi\in\bbr^d$ be the symbol of a Markov process. Then
    \begin{gather*}
        \beta^x_\infty
        :=  \inf\left\{\lambda > 0 : \limsup_{R\to 0} R^\lambda H(x,R) = 0 \right\}
    \end{gather*}
    is the generalized \emph{upper index at infinity}. If $|\Im p(x,\xi)| \leq c_0\cdot\Re p(x,\xi)$, then
    \begin{gather*}
        \beta_0
        :=  \sup\left\{ \lambda \geq 0 : \limsup_{R\to\infty} R^\lambda \sup_{x\in\bbr^d} H(x,R) = 0 \right\}
    \end{gather*}
    is the generalized \emph{upper index at zero}.
\end{definition}
In a similar fashion one can define lower versions of these indices using the function $h(x,R)$ and $\liminf$ which are useful for fine properties of the sample paths, cf.\ \cite{schilling98} for details. Here we restrict our attention to $\beta_\infty^x$ and $\beta_0$. The next lemma helps to simplify the Definition \ref{def:indices}.

\begin{lemma}\label{lem:asymp}
$\displaystyle
    H(x,R) \asymp \sup_{|y-x|\leq 2R}\sup_{|\mathsf{e}|\leq 1} \left|p\left(y,\frac{\mathsf{e}} R\right)\right|
$ for all $R>0$ and $x\in\bbr^d$.
\end{lemma}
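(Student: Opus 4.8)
The plan is to establish the two-sided estimate $M(x,R)\le H(x,R)\le C\,M(x,R)$ with a constant $C$ that depends neither on $x$ nor on $R$, where
\[
    M(x,R):=\sup_{\abs{y-x}\le 2R}\ \sup_{\abs{\mathsf{e}}\le 1}\abs{p\left(y,\frac{\mathsf{e}}{R}\right)}
\]
denotes the right-hand side of the asserted comparison. The lower bound is almost free: for fixed $y$ the map $\xi\mapsto p(y,\xi)$ is negative definite, whence $\Re p(y,\cdot)\ge 0$, and since $g\ge 0$ the integral appearing in \eqref{H-function} is nonnegative. Thus for every admissible pair $(y,\mathsf{e})$ the bracket in \eqref{H-function} dominates $\abs{p(y,\mathsf{e}/R)}$, and passing to the supremum yields $H(x,R)\ge M(x,R)$.

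For the upper bound I would treat the two summands in \eqref{H-function} separately. The term $\abs{p(y,\mathsf{e}/R)}$ is trivially at most $M(x,R)$. The substance lies in the integral term, and the decisive tool is the standard growth estimate for continuous negative definite functions (see \cite{jacob1}): there is a universal constant $c>0$ so that every such function $\psi$ obeys $\abs{\psi(\zeta)}\le c\,(1+\abs{\zeta}^2)\,\sup_{\abs{\eta}\le 1}\abs{\psi(\eta)}$. The trick is to apply this not to $p(y,\cdot)$ but to the rescaled symbol $\zeta\mapsto p(y,\zeta/R)$, which is again negative definite; inserting $\zeta=\rho\mathsf{e}$ and using $\abs{\rho\mathsf{e}}\le\abs{\rho}$ gives
\[
    \abs{p\left(y,\frac{\rho\mathsf{e}}{R}\right)}\le c\,(1+\rho^2)\sup_{\abs{\eta}\le 1}\abs{p\left(y,\frac{\eta}{R}\right)}\le c\,(1+\rho^2)\,M(x,R)
\]
uniformly in $\abs{y-x}\le 2R$ and $\abs{\mathsf{e}}\le 1$. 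Because the growth lemma is applied to the already scaled symbol, its universal constant $c$ introduces no hidden dependence on $y$ or $R$.

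Integrating this pointwise bound against $g$ and using $\Re p\le\abs{p}$, the integral term is controlled by
\[
    \int_{-\infty}^\infty\Re p\left(y,\frac{\rho\mathsf{e}}{R}\right)g(\rho)\,d\rho\le c\,M(x,R)\int_{-\infty}^\infty(1+\rho^2)\,g(\rho)\,d\rho=:c\,c_g\,M(x,R),
\]
where $c_g<\infty$ by the moment bounds for $g$ recorded in Remark \ref{rem:gdef}. Adding the two contributions and taking the supremum over $(y,\mathsf{e})$ gives $H(x,R)\le(1+c\,c_g)\,M(x,R)$, which combined with the lower bound furnishes $H(x,R)\asymp M(x,R)$ with constants free of $x$ and $R$.

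The argument is short once the growth estimate is available, so the main point to watch is \emph{uniformity} of the comparison constants in $x$ and $R$ — a genuine requirement, since $H$ and $M$ are later sent to $0$ or $\infty$ in $R$ in Definition \ref{def:indices}. Both potential obstructions disappear by the scaling device above: the growth lemma carries a constant that is the same for all negative definite functions, and all $R$-dependence is swept into $M(x,R)$. The only other thing to verify is $\Re p(y,\cdot)\ge 0$, which is precisely the negative definiteness of the symbol in the co-variable noted in Section 2, and which underlies both the nonnegativity of the integrand and the lower bound.
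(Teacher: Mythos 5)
Your proof is correct and follows essentially the same route as the paper: the lower bound is read off from the nonnegativity of the integral term in \eqref{H-function}, and the upper bound rests on the quadratic growth bound $\abs{p(y,\rho\mathsf{e}/R)}\leq c\,(1+\rho^2)\sup_{\abs{\eta}\leq 1}\abs{p(y,\eta/R)}$ together with the second moment of $g$. The only cosmetic difference is that you invoke this growth estimate as a known black box applied to the rescaled symbol, whereas the paper derives it on the spot from the subadditivity of $\xi\mapsto\sqrt{\abs{p(y,\xi)}}$ (splitting $\rho$ into $\lfloor\rho\rfloor$ unit steps plus a remainder, exactly as in Lemma \ref{l3}, (c)$\Rightarrow$(a)); note also that this subadditivity argument needs no continuity of $p(y,\cdot)$ in $\xi$, so your appeal to the estimate for \emph{continuous} negative definite functions is harmlessly stronger than required.
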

\begin{proof}
    The estimate $H(x,R)\geq \sup_{|y-x|\leq 2R} \sup_{|\mathsf{e}|\leq 1} \left|p\left(y,\frac{\mathsf{e}} R\right)\right|$ follows immediately from \eqref{H-function}.
    Since $\xi\mapsto p(x,\xi)$ is negative definite, the square root is subadditive, cf.\ \cite{bergforst},
    $$
        \sqrt{|p(x,\xi+\eta)|}
        \leq \sqrt{|p(x,\xi)|} + \sqrt{|p(x,\eta)|}
    $$
    and we conclude that for all $R, \rho>0$ and $y\in\bbr^d$
    \begin{align*}
        \sup_{|\mathsf{e}|\leq 1}\sqrt{\left|p\left(y,\rho\,\frac{\mathsf{e}} R\right)\right|}
        &\leq \sup_{|\mathsf{e}|\leq 1} \left(\sqrt{ \left|p\Big(y, \frac{\mathsf{e}} R\Big)\right|}
        \:\lfloor\rho\rfloor
        + \sqrt{\left|p\Big(y,(\rho-\lfloor\rho\rfloor)\frac{\mathsf{e}}R\Big)\right|}\right)\\
        &\leq 2\,\sup_{|\mathsf{e}|\leq 1}\sqrt{\left|p\left(y,\,\frac{\mathsf{e}} R\right)\right|}\,\big(1+\rho\big).
    \end{align*}
    Since $\int_{-\infty}^\infty (1+\rho)^2\,g(\rho)\,d\rho < \infty$, the lemma follows.
\end{proof}

\begin{example}
(a) For a $d$-dimensional symmetric $\alpha$-stable L\'evy process the symbol is given by $p(x,\xi)=\psi(\xi)=|\xi|^\alpha$, $x,\xi\in\bbr^d$. In this case $\beta_\infty^x = \alpha$ and $\beta_0=\alpha$.

(b) The symbol $p(x,\xi)=|\xi|^{\alpha(x)}$, $x,\xi\in\bbr$, where $\alpha:\bbr \to [0,2]$ is Lipschitz continuous and satisfies $0<\underline\alpha = \inf\alpha(x)\leq \sup\alpha(x)\leq \overline\alpha<2$, corresponds to the so-called \emph{stable-like} Feller process, cf.\ \cite{bas88}. In this case
$\beta_\infty^x = \alpha(x)$ and $\beta_0 = \underline\alpha$.
\end{example}

\begin{lemma}\label{cndfzero}
    The only continuous negative definite function vanishing at infinity is constantly zero.
\end{lemma}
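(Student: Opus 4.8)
The plan is to exploit the subadditivity of $\sqrt{|\psi|}$, which is already on hand in this paper: as recalled in the proof of Lemma \ref{lem:asymp} (following \cite{bergforst}), every continuous negative definite function $\psi$ satisfies
\[
    \sqrt{|\psi(\xi+\eta)|}\leq \sqrt{|\psi(\xi)|}+\sqrt{|\psi(\eta)|}
    \qquad\text{for all }\xi,\eta\in\bbr^d.
\]
Combined with the Hermitian symmetry $\psi(-\eta)=\overline{\psi(\eta)}$ (a defining property of negative definite functions, so in particular $|\psi(-\eta)|=|\psi(\eta)|$), this reduces the statement to a one-line limiting argument and there is no real obstacle to overcome.

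Concretely, I would fix an arbitrary $\xi\in\bbr^d$ and write $\xi=(\xi+\eta)+(-\eta)$. Subadditivity then yields
\[
    \sqrt{|\psi(\xi)|}
    \leq \sqrt{|\psi(\xi+\eta)|}+\sqrt{|\psi(-\eta)|}
    = \sqrt{|\psi(\xi+\eta)|}+\sqrt{|\psi(\eta)|}.
\]
Letting $|\eta|\to\infty$ with $\xi$ held fixed, we have $|\xi+\eta|\to\infty$ as well, so the hypothesis that $\psi$ vanishes at infinity forces both terms on the right-hand side to tend to $0$. Hence $\sqrt{|\psi(\xi)|}=0$, i.e.\ $\psi(\xi)=0$; since $\xi$ was arbitrary, $\psi\equiv 0$. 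The only point requiring any care is that one must choose $\eta$ simultaneously making $\psi(\eta)$ and $\psi(\xi+\eta)$ small, but this is automatic because $\xi$ is fixed while $|\eta|$ grows.

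As an alternative route, one could instead invoke the L\'evy--Khintchine representation \eqref{lkfx}. Writing the real part as a sum of nonnegative contributions,
\[
    \Re\psi(\xi)=\psi(0)+\tfrac12\,\xi^\top Q\xi+\int_{y\neq 0}\bigl(1-\cos(\xi^\top y)\bigr)\,N(dy),
\]
the requirement that this tend to $0$ at infinity forces $\psi(0)=0$ and $Q=0$ immediately, and then $N=0$ by a Riemann--Lebesgue-type averaging argument; only a linear imaginary part $-i\ell^\top\xi$ would survive, which is unbounded unless $\ell=0$. This second approach works but is more laborious, precisely because ruling out the jump measure $N$ from the vanishing of the cosine integral is the one genuinely technical step; I would therefore present the subadditivity argument above as the main proof.
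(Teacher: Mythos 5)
Your proof is correct and is essentially the paper's own argument: both rest on the subadditivity of $\sqrt{|\psi|}$ and on writing a fixed point as the sum of two vectors of large modulus, the paper phrasing it as an $\varepsilon$-$R$ estimate ($\gamma=\xi+\eta$ with $\xi,\eta\in B_R(0)^c$) while you phrase it as a limit along $|\eta|\to\infty$. The appeal to Hermitian symmetry is harmless but unnecessary, since $|{-\eta}|=|\eta|\to\infty$ already makes $\psi(-\eta)\to 0$ directly from the hypothesis.
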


\begin{proof}
Let $\psi$ be a continuous negative definite function which vanishes at infinity. For every $\varepsilon >0$ there exists some  $R>0$ such that $|\psi(\xi)|\leq \varepsilon^2/4$ if $|\xi|>R$. For every $\gamma \in B_R (0)$ there exist two vectors $\xi,\eta \in B_R (0)^c$ such that $\gamma=\xi + \eta$. By the sub-additivity of $\sqrt{\abs{\psi}}$ we obtain
$$
    \sqrt{\abs{\psi(\gamma)}}
    =\sqrt{\abs{\psi(\xi+\eta)}}
    \leq \sqrt{\abs{\psi(\xi)}}+\sqrt{\abs{\psi(\eta)}}
    \leq \varepsilon
$$
which completes the proof.
\end{proof}

We can now simplify the calculation of the upper index.  The
assumptions of the following proposition are trivially satisfied by
any Feller process satisfying condition \eqref{rich}, cf.\
\cite{courrege}.

\begin{proposition}\label{prop:index}
    Let $p(x,\xi)$ be a non-trivial (i.e.\ non-constant) symbol of a Markov process  which is locally bounded. The generalized upper index $\beta_\infty^x$ can be calculated in the following way
    $$
    \beta_\infty^x
    =\beta(x)
    := \limsup_{\abs{\eta}\to\infty} \sup_{\abs{y-x}\leq 2/\abs{\eta}} \frac{\log\abs{p(y,\eta)}}{\log\abs{\eta}}.
$$
\end{proposition}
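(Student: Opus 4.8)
The plan is to run everything through Lemma~\ref{lem:asymp}, which gives $H(x,R) \asymp \sup_{|y-x|\le 2R}\sup_{|\mathsf{e}|\le 1}|p(y,\mathsf{e}/R)|$ with \emph{universal} constants. Since multiplying by $R^\lambda$ preserves $\asymp$, the value of $\beta_\infty^x$ is unchanged if I replace $H(x,R)$ by $G(x,R):=\sup_{|y-x|\le 2R}\sup_{|\xi|\le 1/R}|p(y,\xi)|$ (substitute $\xi=\mathsf{e}/R$). Writing $r=1/R$ and $\Psi(r):=G(x,1/r)=\sup_{|y-x|\le 2/r}\sup_{|\xi|\le r}|p(y,\xi)|$, I would next record the elementary fact that for a function $\Psi$ that is eventually positive and grows at most polynomially, $\inf\{\lambda>0:\limsup_{r\to\infty}r^{-\lambda}\Psi(r)=0\}=\limsup_{r\to\infty}\frac{\log\Psi(r)}{\log r}$: if $\lambda$ exceeds the right-hand side then $\Psi(r)\le r^\lambda$ eventually and $r^{-\lambda}\Psi(r)\to 0$, while if $\lambda$ is smaller the reverse fails along a subsequence. (Eventual positivity follows by taking $y=x$: non-triviality of $p(x,\cdot)$ and Lemma~\ref{cndfzero} prevent it from vanishing at infinity, so $\sup_{|\xi|\le r}|p(x,\xi)|$ is bounded below, and the polynomial bound comes from local boundedness together with the quadratic growth of negative definite functions.) Thus $\beta_\infty^x=\limsup_{r\to\infty}\frac{\log\Psi(r)}{\log r}$, and it remains to compare this with $\beta(x)$.

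For the inequality $\beta(x)\le\beta_\infty^x$, I would fix $\eta$ and set $r=|\eta|$. Every $y$ with $|y-x|\le 2/|\eta|=2/r$ satisfies $|p(y,\eta)|\le\sup_{|\xi|\le r}|p(y,\xi)|$, hence $\sup_{|y-x|\le 2/|\eta|}|p(y,\eta)|\le\Psi(r)$ and $\frac{\sup_{|y-x|\le 2/|\eta|}\log|p(y,\eta)|}{\log|\eta|}\le\frac{\log\Psi(r)}{\log r}$. Taking $\limsup$ as $|\eta|\to\infty$ yields $\beta(x)\le\beta_\infty^x$.

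The reverse inequality is the crux. Fix $\lambda>\beta(x)$; since $\beta(x)$ is a $\limsup$, there is some $r_0\ge 1$ with $|p(y,\eta)|<|\eta|^\lambda$ whenever $|\eta|\ge r_0$ and $|y-x|\le 2/|\eta|$. I would then bound $\Psi(r)$ for $r\ge r_0$ by splitting the ball $\{|\xi|\le r\}$. On the low frequencies $\{|\xi|<r_0\}$ the symbol is bounded: as $r\ge 1$ gives $|y-x|\le 2/r\le 2$, local boundedness yields $\sup_{|y-x|\le 2,\,|\xi|\le r_0}|p(y,\xi)|=:C_0<\infty$, independent of $r$. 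On the high frequencies $\{r_0\le|\xi|\le r\}$ the decisive observation is that the spatial balls are \emph{nested}: for $|\xi|\le r$ one has $2/|\xi|\ge 2/r$, so every $y$ with $|y-x|\le 2/r$ automatically satisfies $|y-x|\le 2/|\xi|$, and the preceding bound applies to give $|p(y,\xi)|<|\xi|^\lambda\le r^\lambda$. Hence $\Psi(r)\le\max(C_0,r^\lambda)$, which equals $r^\lambda$ for all large $r$ (this is where $\lambda>0$ is used, and $\beta(x)\ge 0$ follows from non-triviality and Lemma~\ref{cndfzero} as above). Therefore $\frac{\log\Psi(r)}{\log r}\le\lambda$ eventually, so $\beta_\infty^x\le\lambda$, and letting $\lambda\downarrow\beta(x)$ completes the argument.

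I expect the reverse inequality to be the main obstacle, and within it the handling of the supremum over the full frequency ball rather than over a single sphere. The resolution rests on two complementary points: low frequencies contribute only the fixed constant $C_0$ by local boundedness, while for high frequencies the ball $\{|y-x|\le 2/r\}$ appearing in $\Psi(r)$ is the smallest among all the balls $\{|y-x|\le 2/|\xi|\}$ occurring in $\beta(x)$, so the pointwise frequency estimate extracted from $\beta(x)$ transfers uniformly. Verifying $\beta(x)\ge 0$, which guarantees that the polynomial term dominates $C_0$, is the subsidiary point where the non-triviality hypothesis and Lemma~\ref{cndfzero} are indispensable.
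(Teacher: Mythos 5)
Your argument is correct and is essentially the paper's own proof in reorganized form: both reduce to Lemma~\ref{lem:asymp}, use Lemma~\ref{cndfzero} together with local boundedness and the subadditivity-based quadratic bound to pin $\beta(x)\in[0,2]$, and then compare the supremum of $\abs{p}$ against $r^{\lambda}$ for $\lambda$ on either side of $\beta(x)$. The only difference is presentational: the paper casts the comparison as an exponential dichotomy for the sphere-matched quantity $\sup_{\abs{y-x}\leq 2/\abs{\eta}}\abs{p(y,\eta)}$ and leaves the passage to the full frequency ball in $H(x,R)$ implicit, whereas you spell that step out (the low-frequency constant $C_0$ plus the nested spatial balls), which is a clarification rather than a deviation.
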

\begin{proof} First we show that $\beta(x)\in [0,2]$. Fix $x\in\bbr^d$. For $\abs{\eta} > 1$ we have only to consider points $y$ such that $\abs{y-x}\leq 2$. The argument used in the proof of Lemma \ref{lem:asymp} can be modified to prove that
$$
    |p(y,\eta)| \leq h(y)\cdot (1+|\eta|^2),
    \qquad
    h(y) = 4\sup_{|\xi|\leq 1} |p(y,\xi)|.
$$
Since $p(y,\eta)$ is locally bounded, we see there exists a constant
$C>0$ such that
$$
    \frac{\log\abs{p(y,\eta)}}{\log\abs{\eta}} \leq \frac{\log (2C) + \log\abs{\eta}^2}{\log\abs{\eta}}
    \leq \frac{\log (2C)}{\log{\abs{\eta}}} + 2.
$$
The right-hand side tends to $2$ as $\abs{\eta}\to\infty$. This shows that $\beta(x)\leq 2$. In order to see that $\beta(x)\geq 0$, we note that
$$
    \sup_{\abs{y-x}
    \leq 2/\abs{\eta}} \frac{\log\abs{p(y,\eta)}}{\log\abs{\eta}} \geq \frac{\log\abs{p(x,\eta)}}{\log\abs{\eta}}.
$$
Because of Lemma \ref{cndfzero} there exists a $\delta > 0$ such that for every $R>0$ there is some $\xi$ with $\abs{\xi}\geq R$ and $\abs{p(x,\xi)} > \delta$. Therefore,
$$
    \limsup_{\abs{\eta}\to\infty} \frac{\log\abs{p(x,\eta)}}{\log\abs{\eta}}
    \geq \limsup_{\abs{\eta}\to\infty}\frac{\log \delta}{\log\abs{\eta}} = 0,
$$
and we conclude that $\beta(x)\geq 0$.

In view of Lemma \ref{lem:asymp}, $\beta^x_\infty = \beta(x)$ follows, if we can show that
$$
    \limsup_{\abs{\xi}\to\infty}\frac{\sup_{\abs{x-y}\leq 2/ \abs{\xi}} \abs{p(y,\xi)}}{\abs{\xi}^\lambda}
    = 0\text{\ \ or\ \ }\infty
$$
according to $\lambda > \beta(x)$ or $\lambda <\beta(x)$. Let $h \in \bbr$. Then
\begin{align*}
    \frac{\sup_{\abs{x-y}\leq 2/ \abs{\xi}} \abs{p(y,\xi)}}{\abs{\xi}^{\beta(x)+h}}
    &= \exp \left( \log \left(\sup_{\abs{x-y}\leq 2/\abs{\xi}} \abs{p(y,\xi)} \right) - \left( \beta(x) +h \right)  \log{\abs{\xi}}\right) \\
    &=
    \exp \left(\left(\frac{\sup_{\abs{y-x}\leq 2/\abs{\xi}} \log\abs{p(y,\xi)}}{\log\abs{\xi}} - \beta(x)\right)\cdot\log{\abs{\xi}}-h \cdot\log{\abs{\xi}} \right).
\end{align*}
Taking the $\limsup$ for $\abs{\xi}\to \infty$ of this expression, the inner bracket converges to zero since $\beta(x) \in [0,2]$ as we have seen above. This means there exists some $r = r_h>0$ such that for every $R\geq r$
$$
    \left(\sup_{\abs{\xi}\geq R}\frac{\sup_{\abs{y-x}\leq 2/\abs{\xi}} \log\abs{p(y,\xi)}}{\log\abs{\xi}} - \beta(x)\right)<\frac{h}{2}.
$$
Thus, if $h>0$,
$$
    \limsup_{\abs{\xi}\to\infty} \frac{\sup_{\abs{x-y}\leq 2/ \abs{\xi}} \abs{p(y,\xi)}}{\abs{\xi}^{\beta(x)+h}} \leq \limsup_{\abs{\xi}\to\infty} \exp (\log (\abs{\xi}^{-h/2})) = 0;
$$
if $h <0$,
$$
    \limsup_{\abs{\xi}\to\infty} \frac{\sup_{\abs{x-y}\leq 2/ \abs{\xi}} \abs{p(y,\xi)}}{\abs{\xi}^{\beta(x)+h}}
    \geq \limsup_{\abs{\xi}\to\infty} \exp (\log (\abs{\xi}^{-h/2})) = \infty,
$$
which completes the proof.
\end{proof}

\begin{theorem}\label{theorem:sdeindex}
Let $X$ be a solution process of the SDE \eqref{sde}  with $d=n$ and
 where the linear mapping $\xi\mapsto\Phi^\top\!(y)\xi$ is
 bijective  for every $y\in\bbr^d$. If the driving
L\'evy process has the non-constant symbol $\psi$ and index
$\beta_\infty^\psi$, then the solution $X$ of the SDE has, for every
$x\in\bbr^d$, the upper index
$\beta_\infty^x\equiv\beta_\infty^\psi$.
\end{theorem}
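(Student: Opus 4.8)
The plan is to pass to the analytic formula for the index provided by Proposition \ref{prop:index} and then to read off the answer from the explicit shape of the symbol. By Theorem \ref{sde-symbol} the solution has symbol $p(x,\xi)=\psi(\Phi^\top\!(x)\xi)$. As $\psi$ is non-constant and $\Phi^\top\!(x)$ is bijective, $p(x,\cdot)$ is non-constant for each $x$, and since $\psi$ is continuous negative definite (hence locally bounded with at most quadratic growth) and $\Phi$ is continuous, $p$ is locally bounded. Thus Proposition \ref{prop:index} applies both to $p$ and to the space-homogeneous symbol $\psi$, and the claim $\beta_\infty^x=\beta_\infty^\psi$ reduces to the identity
\begin{gather*}
    \limsup_{\abs\eta\to\infty}\ \sup_{\abs{y-x}\leq 2/\abs\eta}\frac{\log\abs{\psi(\Phi^\top\!(y)\eta)}}{\log\abs\eta}
    =\limsup_{\abs\zeta\to\infty}\frac{\log\abs{\psi(\zeta)}}{\log\abs\zeta}.
\end{gather*}
Since $\beta_\infty^\psi\geq 0$ (Proposition \ref{prop:index}), in either limsup I may restrict to frequencies with $\abs{\psi}\geq 1$, where $0\leq\log\abs\psi=O(\log\abs\zeta)$; this discards the regime in which $\psi$ is small and renders every additive and multiplicative $O(1)$ correction below harmless.

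First I would eliminate the inner supremum over $y$. For $\abs\eta\geq 1$ the ball $\abs{y-x}\leq 2/\abs\eta$ lies in the fixed set $\overline{B_2(x)}$, on which $\Phi$ has a Lipschitz constant $L=L_x$, so
\begin{gather*}
    \abs{\Phi^\top\!(y)\eta-\Phi^\top\!(x)\eta}\leq L\,\abs{y-x}\,\abs\eta\leq 2L,
\end{gather*}
a perturbation of bounded length, uniform in the admissible $y$ and in $\eta$. Because $\xi\mapsto\psi(\xi)$ is negative definite, $\sqrt{\abs\psi}$ is subadditive (as in the proof of Lemma \ref{lem:asymp}); writing $\Phi^\top\!(y)\eta=\Phi^\top\!(x)\eta+(\Phi^\top\!(y)-\Phi^\top\!(x))\eta$ and using the continuity of $\psi$ on $\{\abs{\zeta'}\leq 2L\}$ yields a constant $M$ with
\begin{gather*}
    \Big|\sqrt{\abs{\psi(\Phi^\top\!(y)\eta)}}-\sqrt{\abs{\psi(\Phi^\top\!(x)\eta)}}\Big|\leq M.
\end{gather*}
Squaring and taking logarithms, this alters $\log\abs{\psi(\cdot)}$ by only an additive $O(1)$ on the relevant range $\abs\psi\geq 1$; after division by $\log\abs\eta\to\infty$ it disappears in the limsup, so the inner supremum may be dropped and the left-hand side equals $\limsup_{\abs\eta\to\infty}\log\abs{\psi(\Phi^\top\!(x)\eta)}/\log\abs\eta$.

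It remains to substitute $\zeta=\Phi^\top\!(x)\eta$. By hypothesis $\Phi^\top\!(x)$ is an invertible matrix, so this is a linear bijection of $\bbr^d$ with $c_1\abs\eta\leq\abs\zeta\leq c_2\abs\eta$ for fixed $0<c_1\leq c_2$; hence $\abs\eta\to\infty$ if and only if $\abs\zeta\to\infty$, the substitution sweeps out all of $\bbr^d$ so that no part of the behaviour of $\psi$ at infinity is missed, and $\log\abs\zeta=\log\abs\eta+O(1)$. On the range $\abs\psi\geq 1$ one has $\log\abs{\psi(\zeta)}=O(\log\abs\zeta)$, so replacing $\log\abs\eta$ by $\log\abs\zeta$ in the denominator changes the ratio by $O(1/\log\abs\zeta)\to 0$; consequently the last limsup equals $\limsup_{\abs\zeta\to\infty}\log\abs{\psi(\zeta)}/\log\abs\zeta=\beta_\infty^\psi$. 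Since $x$ was arbitrary, this gives $\beta_\infty^x\equiv\beta_\infty^\psi$.

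I expect the main obstacle to be the bookkeeping in the middle step: one must confirm that the bounded perturbation of $\sqrt{\abs\psi}$ really is negligible after dividing by $\log\abs\eta$, which is where the restriction to $\abs\psi\geq 1$ (justified by $\beta_\infty^\psi\geq 0$) does the work. The two hypotheses enter precisely here and in the substitution: local Lipschitz continuity keeps the perturbation $O(1)$ on the shrinking balls, while bijectivity of $\Phi^\top\!(x)$ ensures both that $\abs{\Phi^\top\!(x)\eta}$ grows at the rate $\abs\eta$ and that $\zeta\mapsto\eta$ is surjective, so that $\beta_\infty^\psi$ is captured in full rather than only along a proper subspace.
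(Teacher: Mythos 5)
Your route is the paper's route almost step for step: pass to the characterization of Proposition \ref{prop:index} and the formula $p(x,\xi)=\psi(\Phi^\top\!(x)\xi)$ from Theorem \ref{sde-symbol}, remove the inner supremum over $y$ by combining the local Lipschitz bound $\abs{\Phi^\top\!(y)\eta-\Phi^\top\!(x)\eta}\leq 2L_x$ with subadditivity of $\sqrt{\abs{\psi}}$, then use bijectivity of $\Phi^\top\!(x)$ to substitute $\zeta=\Phi^\top\!(x)\eta$ and identify the remaining limsup with $\beta_\infty^\psi$. The one genuine gap is your opening reduction: ``since $\beta_\infty^\psi\geq 0$ I may restrict to frequencies with $\abs{\psi}\geq 1$.'' This does not follow. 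Having $\limsup_{\abs\zeta\to\infty}\log\abs{\psi(\zeta)}/\log\abs{\zeta}\geq 0$ is perfectly compatible with the set $\{\abs{\psi}\geq 1\}$ being bounded or even empty: take $\psi(\xi)=\int(1-e^{iy\xi})\,N(dy)$ for a finite measure $N$ with $N(\bbr)<1/2$ (a compound Poisson exponent); this $\psi$ is non-constant with $\abs{\psi}<1$ everywhere, so your restricted limsups run over the empty set, and every subsequent estimate that you explicitly anchor on $\abs{\psi}\geq 1$ (the additive $O(1)$ after squaring and taking logarithms, the bound $0\leq\log\abs{\psi}=O(\log\abs{\zeta})$) has no domain to live on. More subtly, $\limsup\geq 0$ by itself would even tolerate $\psi\to 0$ at infinity (decay like $1/\log\abs{\zeta}$ still gives limsup $=0$); what rules this out is negative definiteness, via Lemma \ref{cndfzero}, which you never invoke.

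The repair is exactly the step of the paper's proof you are missing. By Lemma \ref{cndfzero} a non-constant continuous negative definite $\psi$ cannot vanish at infinity, so there exist $\varepsilon>0$ and $\abs{\xi_n}\to\infty$ with $\abs{\psi(\xi_n)}>\varepsilon$; pulling back through the bijection $\Phi^\top\!(x)$ produces an unbounded set $\{\eta:\abs{\psi(\Phi^\top\!(x)\eta)}\geq\varepsilon\}$ (the paper's \eqref{subset}) to which both limsups may legitimately be restricted, since on its complement the ratios have limsup $\leq 0$ (there $\abs{\psi(\Phi^\top\!(x)\eta)}<\varepsilon$, and by your perturbation bound $\sup_y\abs{\psi(\Phi^\top\!(y)\eta)}\leq(\sqrt\varepsilon+M)^2$ stays bounded), while on the set itself the ratios are $\geq\log\varepsilon/\log\abs{\eta}\to 0$. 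With the threshold $\varepsilon$ in place of $1$ your bookkeeping survives verbatim: the bound $\sqrt{\abs{\psi(\Phi^\top\!(y)\eta)}}\leq\sqrt{\abs{\psi(\Phi^\top\!(x)\eta)}}+M$ becomes an additive $O(1)$ on $\log\abs{\psi}$ because $\log$ is uniformly continuous on $[\varepsilon,\infty[$ (this is how the paper phrases it), and the change of denominator from $\log\abs{\eta}$ to $\log\abs{\zeta}$ remains harmless since $\log\varepsilon\leq\log\abs{\psi}\leq O(\log\abs{\zeta})$ there. So: same approach as the paper, correct in all other steps, but with one load-bearing reduction misjustified; it must be anchored on Lemma \ref{cndfzero}, which is precisely where the non-constancy hypothesis on $\psi$ enters.
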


\begin{proof}
Fix $x\in\bbr^d$. We use the characterization of the index from
Proposition \ref{prop:index}
$$
\beta_\infty^x= \limsup_{\abs{\eta}\to\infty} \sup_{\abs{y-x}\leq 2/\abs{\eta}} \frac{\log\abs{p(y,\eta)}}{\log\abs{\eta}}.
$$
From Theorem \ref{sde-symbol} we know that
$p(x,\xi)=\psi(\Phi^\top\!(x)\xi)$. Therefore,
$$
    \frac{\log \abs{\psi ( \Phi^\top\!(y)\eta )}}{\log \abs{\eta}}
    = \frac{\log \abs{\psi ( \Phi^\top\!(y)\eta )}}{\log \abs{\Phi^\top\!(x)\eta}} \cdot \frac{\log \abs{\Phi^\top\!(x)\eta}}{\log \abs{\eta}}
$$
where the second factor is bounded from above and below, since the
function $\eta \mapsto \phi^\top\!(x)\eta$ is  bijective.
Consequently,
$$
    \beta_\infty^x
    = \limsup_{\abs{\eta}\to\infty} \sup_{\abs{y-x}\leq 2/\abs{\eta}} \frac{\log \abs{\psi (\Phi^\top\!(y)\eta )}}{\log \abs{\Phi^\top\!(x)\eta}}.
$$
By Lemma \ref{cndfzero} $\psi$ does not vanish at infinity. In
particular there exists an $\varepsilon>0$ and a sequence
$(\xi_n)_{n\in \bbn}$ such that $\abs{\xi_n}\to\infty$ and
$\abs{\psi(\xi_n)}>\varepsilon$ for every $n\in\bbn$. Since
$\eta\mapsto\Phi^\top\!(x)\eta$ is linear and bijective there exists
a sequence $(\eta_n)_{n\in\bbn}$ such that
$\abs{\psi(\Phi^\top\!(x)\eta_n)}>\varepsilon$ for every $n\in\bbn$
and $\abs{\eta_n}\to\infty$.  In order to calculate the upper limit
it is, therefore, enough to consider the set
\begin{align} \label{subset}
    \big\{\eta \in \bbr^d : \abs{\psi(\Phi^\top\!(x)\eta)}\geq \varepsilon\big\}.
\end{align}

We write
\begin{equation}\label{zeroadd}\begin{aligned}
  \sup_{\abs{y-x}\leq 2/\abs{\eta}} & \frac{\log \abs{\psi ( \Phi^\top\!(y)\eta )}}{\log \abs{\Phi^\top\!(x)\eta}}\\
  &=\frac{\sup_{\abs{y-x}\leq 2/\abs{\eta}}\log \abs{\psi ( \Phi^\top\!(y)\eta )}-\log \abs{\psi(\Phi^\top\!(x)\eta)}}{\log \abs{\Phi^\top\!(x)\eta}}
   + \frac{\log \abs{\psi(\Phi^\top\!(x)\eta)}}{\log \abs{\Phi^\top\!(x)\eta}}.
\end{aligned}\end{equation}
Denoting the local Lipschitz constant of $y\mapsto \Phi(y)$ in a neighbourhood of $x$ by $L_x \geq 0$, we obtain for $\abs{y-x}\leq 2/\abs{\eta}$
\begin{align*}
\abs{\Phi^\top\!(y)\eta - \Phi^\top\!(x)\eta } \leq \abs{\eta} \cdot \abs{\Phi(y)-\Phi(x)} \leq \abs{\eta} \cdot L_x\abs{y-x}
\leq  2\,L_x.
\end{align*}
It follows that the numerator of the first term on the right-hand
side of \eqref{zeroadd} is bounded  on the set \eqref{subset}
because the function $y\mapsto \log \abs{y}$ is uniformly continuous
on $[\varepsilon,\infty[$;  for the second term we obtain
$$
    \limsup_{\abs{\eta}\to\infty}  \frac{\log \abs{\psi(\Phi^\top\!(x)\eta)}}{\log \abs{\Phi^\top\!(x)\eta}}
    = \limsup_{\abs{\xi}\to\infty} \frac{\log\abs{\psi(\xi)}}{\log\abs{\xi}}
    =\beta_\infty^\psi
$$
since the function $\eta \mapsto \phi^\top\!(x)\eta$ is bijective and linear.
\end{proof}

\begin{remark}\label{rem:index}
In order to obtain $\beta_\infty^x \leq \beta_\infty^\psi$ in the
case $d\leq n$  it is sufficient to demand that $\Phi(y)$ never
vanishes.
\end{remark}

We will first use this theorem to derive a result on the (strong)
$\gamma$-variation of the process $X$.
\begin{definition}\label{def:gammavariation}
    If $\gamma\in]0,\infty[$ and $g$ is an $\bbr^d$-valued function on the interval $[a,b]$ then
    $$
        V^\gamma(g; [a,b]) := \sup_{\pi_n} \sum_{j=1}^n
        \abs{g(t_j)-g(t_{j-1})}^\gamma
    $$
    where the supremum is taken over all partitions $\pi_n = (a=t_0 <t_1 < \ldots < t_n =b)$ of $[a,b]$ is called the  \emph{(strong) $\gamma$-variation} of $g$ on $[a,b]$.
\end{definition}

\begin{corollary}\label{cor:variation}
    Let $X^x=(X^x_t)_{t\geq 0}$ be the solution of the SDE \eqref{sde} where $Z$ is a L\'evy process with characteristic exponent $\psi$. Denote by $\beta_\infty^x$ the generalized upper index of $X$. Then
    $$
        V^\gamma(X^x; [0,T]) < \infty\quad \Pp^x\text{-a.s.\ for every\ \ } T>0
    $$
    if $\gamma>\sup_x \beta_\infty^x$. \newline
In the situation of Theorem \ref{theorem:sdeindex} the index $\sup_x
\beta_\infty^x = \beta_\infty^\psi$ where $\beta_\infty^\psi$ is the
upper index of the driving L\'evy process.
\end{corollary}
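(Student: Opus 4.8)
The plan is to combine a maximal inequality, which controls the small-time displacement of $X^x$ through its symbol, with a general criterion of Manstavi\v{c}ius for the finiteness of the $\gamma$-variation of a strong Markov process. By Theorem \ref{feller-solution} the solution $X^x$ is a time-homogeneous strong Markov process, so such a criterion is available: if the quantity
\[
    \alpha(h,a):=\sup_{x\in\bbr^d}\ \sup_{0\le t\le h}\Pp^x\big(\abs{X_t-x}\ge a\big)
\]
admits a bound of the form $\alpha(h,a)\le c\,h\,a^{-\beta}$ for all sufficiently small $h,a>0$, then $V^\gamma(X^x;[0,T])<\infty$ $\Pp^x$-a.s.\ for every $T>0$ and every $\gamma>\beta$. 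It therefore suffices to establish such an estimate with some exponent $\beta<\gamma$.

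First I would invoke the maximal inequality for Feller processes from \cite{schilling98} (whose proof rests on the techniques recalled in the Appendix),
\[
    \Pp^x\Big(\sup_{s\le t}\abs{X_s-x}\ge R\Big)\le c\,t\,\sup_{\abs{y-x}\le R}\ \sup_{\abs{\xi}\le 1/R}\abs{p(y,\xi)},
\]
valid with a constant $c$ that does not depend on $x,t,R$. Since $\Pp^x(\abs{X_t-x}\ge a)\le\Pp^x(\sup_{s\le t}\abs{X_s-x}\ge a)$, taking the supremum over $x$ and over $t\le h$ and invoking Lemma \ref{lem:asymp} yields
\[
    \alpha(h,a)\le c\,h\,\sup_{x\in\bbr^d}H(x,a)\asymp c\,h\,\sup_{y\in\bbr^d}\ \sup_{\abs{\xi}\le 1/a}\abs{p(y,\xi)}.
\]
The task is thus reduced to bounding the \emph{uniform} symbol functional $\bar H(a):=\sup_{y}\sup_{\abs{\xi}\le 1/a}\abs{p(y,\xi)}$ by a power $a^{-\beta}$ with $\beta<\gamma$.

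Here the explicit shape of the symbol enters. By Theorem \ref{sde-symbol} we have $p(y,\xi)=\psi(\Phi^\top\!(y)\xi)$, and since $\Phi$ is bounded, $\abs{\Phi^\top\!(y)\xi}\le\nnorm{\Phi}_\infty\abs{\xi}$ uniformly in $y$, so that
\[
    \bar H(a)\le\sup_{\abs{\eta}\le \nnorm{\Phi}_\infty/a}\abs{\psi(\eta)}.
\]
The uniform-in-$y$ bound has collapsed to the growth of the single characteristic exponent $\psi$. Applying Proposition \ref{prop:index} to $\psi$ (the symbol of the driving L\'evy process), for every $\varepsilon>0$ there is a constant $C_\varepsilon$ with $\sup_{\abs{\eta}\le r}\abs{\psi(\eta)}\le C_\varepsilon\,r^{\beta_\infty^\psi+\varepsilon}$ for $r\ge1$; hence $\bar H(a)=O\big(a^{-(\beta_\infty^\psi+\varepsilon)}\big)$ as $a\downarrow0$. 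Feeding $\beta=\beta_\infty^\psi+\varepsilon$ into the criterion and letting $\varepsilon\downarrow0$ gives $V^\gamma(X^x;[0,T])<\infty$ a.s.\ whenever $\gamma>\beta_\infty^\psi$; the concluding sentence of the corollary is then precisely the identity $\sup_x\beta_\infty^x=\beta_\infty^\psi$ supplied by Theorem \ref{theorem:sdeindex}.

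The step I expect to be genuinely delicate is matching the \emph{pointwise} threshold $\sup_x\beta_\infty^x$ demanded in the statement with the \emph{uniform} growth bound required by Manstavi\v{c}ius' theorem. A priori $\sup_x H(x,a)$ is a supremum of pointwise $\limsup$'s and need not be governed by $\sup_x\beta_\infty^x$; the argument above circumvents this only because boundedness of $\Phi$ lets the whole family $\{p(y,\cdot)\}_{y}$ be dominated by the single function $\psi$, for which $\beta_\infty^\psi$ does describe the uniform growth, and because in the setting of Theorem \ref{theorem:sdeindex} one has $\sup_x\beta_\infty^x=\beta_\infty^\psi$. For the general statement one should instead localise: the c\`adl\`ag paths of $X^x$ are a.s.\ bounded on $[0,T]$, so $X^x$ may be stopped upon leaving a large ball and the index taken over the (compact) range of the path, the variation being recombined over the excursions. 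Carrying out this localisation cleanly—rather than the routine Taylor and tail estimates—is the part that requires real care.
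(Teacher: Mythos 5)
Your skeleton is exactly the paper's: Theorem \ref{feller-solution} gives the strong Markov property, the maximal inequality
$\Pp^x\big(\sup_{0\le s\le t}\abs{X_s-x}\ge r\big)\le C\,t\,\sup_{\abs{y-x}\le 2r}\sup_{\abs{\mathsf{e}}\le 1}\abs{p(y,\mathsf{e}/r)}$
from \cite{schilling98} (Lemmas 4.1 and 5.1) controls $\alpha(h,r)$, Lemma \ref{lem:asymp} converts this into a statement about the indices, and Manstavi\v{c}ius' Theorem 1.3 finishes. However, the step where you actually execute the symbol bound deviates and leaves a genuine gap. Dominating $\bar H(a)$ by $\sup_{\abs{\eta}\le \nnorm{\Phi}_\infty/a}\abs{\psi(\eta)}$ (i) requires $\Phi$ to be bounded, an assumption the corollary does not make --- Section 3 only assumes locally Lipschitz $\Phi$ with the linear growth condition \eqref{growth}, and even Theorem \ref{theorem:sdeindex}, invoked in the corollary's last sentence, does not assume boundedness --- and (ii) produces the threshold $\beta_\infty^\psi$ rather than the stated threshold $\sup_x\beta_\infty^x$, which under your boundedness assumption satisfies $\sup_x\beta_\infty^x\le\beta_\infty^\psi$ but need not equal it. So what you have proved is a special case adjacent to the second sentence of the corollary, not the first claim. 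You correctly diagnose the remaining difficulty (pointwise indices versus the uniform-in-$x$ bound Manstavi\v{c}ius demands), and your proposed remedy --- localisation --- is indeed the paper's device, but you defer it as the part ``requiring real care'' and your sketch (``recombining the variation over excursions'') overcomplicates what is actually a short argument.

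Here is how the paper closes it, without ever using the product form $p(x,\xi)=\psi(\Phi^\top\!(x)\xi)$ at this stage: for any $\lambda>\sup_x\beta_\infty^x$, Lemma \ref{lem:asymp} and Definition \ref{def:indices} give $\sup_{\abs{y-x}\le 2/\abs{\eta}}\abs{p(y,\eta)}\le\tilde C\,\abs{\eta}^\lambda$ for $\abs{\eta}$ large. One applies Manstavi\v{c}ius not to $X$ but to the stopped process $X^\sigma$ with $\sigma=\sigma_R^0$, the exit time from the fixed ball $B_R(0)$, which is again strong Markov; for this process only starting points in a compact set are relevant (from outside $B_R(0)$ it does not move), so the estimate becomes uniform in $x$ and yields $\alpha(h,r)\le C\tilde C\,h\,r^{-\lambda}$ for small $r$, i.e.\ $X^\sigma\in\cm(1,\sup_x\beta_\infty^x)$, whence $V^\gamma(X^\sigma;[0,T])<\infty$ a.s.\ for $\gamma>\sup_x\beta_\infty^x$ by \cite{manstavicius} Theorem 1.3. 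No excursion bookkeeping is needed to remove the stopping: c\`adl\`ag paths are a.s.\ bounded on $[0,T]$, so $\sigma_R^0>T$ a.s.\ for $R$ large enough, and then $V^\gamma(X^x;[0,T])=V^\gamma((X^x)^\sigma;[0,T])<\infty$; one lets $R\uparrow\infty$. If you substitute this stopping argument for your bounded-$\Phi$ reduction (rather than appending it), your proof coincides with the paper's and covers the general linear-growth setting with the correct threshold.
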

\begin{proof}[Proof of Corollary \ref{cor:variation}]
    Since $X$ is a strong Markov process we can use a criterion for the finiteness of $\gamma$-variations due to Manstavi\v{c}ius \cite{manstavicius}. Consider for $h\in [0,T]$ and $r>0$
\begin{align*}
    \alpha(h,r)
    &=\sup \big\{ \Pp^x(\abs{X_t-x}\geq r) : x\in \bbr^d, 0\leq t \leq (h \wedge T) \big\} \\
    &\leq \sup_{t\leq h} \sup_{x\in\bbr^d} \Pp^x\left(\sup_{0\leq s\leq t} \abs{X_s-x}\geq
    r\right).
\end{align*}
Using Lemma 4.1 and Lemma 5.1 in \cite{schilling98} we obtain
$$
    \Pp^x\left(\sup_{0\leq s\leq t} \abs{X_s-x}\geq r\right)
    \leq C\cdot t \sup_{\abs{y-x}\leq 2r} \sup_{\abs{{\mathsf{e}}} \leq 1} \abs{p\left(y,\frac{{\mathsf{e}}}{r}\right)}
$$
where $C\geq 0$ is independent of $x$ and $t$. Hence,
\begin{align*}
    \alpha(h,r)
    &\leq \sup_{t\leq h} \sup_{x\in\bbr^d}  C \cdot t   \sup_{\abs{y-x}\leq 2r} \sup_{\abs{{\mathsf{e}}} \leq 1} \abs{p\left(y,\frac{{\mathsf{e}}}{r}\right)} \\
    &\leq  C \cdot h \sup_{x\in\bbr^d} \left(\sup_{\abs{y-x}\leq 2r} \sup_{\abs{{\mathsf{e}}} \leq 1} \abs{p\left(y,\frac{{\mathsf{e}}}{r}\right)}\right) \\
    & \leq   C \cdot h \sup_{x\in\bbr^d} \left( \sup_{\abs{\eta} \leq (1/r)} \sup_{\abs{y-x}\leq (2/ \abs{\eta})} \abs{p\left(y,\eta\right)}\right).
\end{align*}
From Lemma \ref{lem:asymp} we know that for every $\lambda > \sup_x
\beta_\infty^x$
$$
    \lim_{|\eta|\to \infty}\frac{\sup_{\abs{y-x}\leq (2/ \abs{\eta})} \abs{p\left(y,\eta\right)}}{\abs{\eta}^\lambda} =
    0.
$$
Therefore we find for every $x$ a compact set $K$ such that
$$
\sup_{\abs{y-x}\leq 2/ \abs{\eta}} \abs{p\left(y,\eta\right)} \leq \tilde{C}\cdot \abs{\eta}^\lambda \leq
\tilde{C}\cdot r^{-\lambda}
$$
on the complement of $K$. Since the right-hand side is independent of $x$, there exists an $r_0 >0$ such that for all $r\in ]0,r_0]$ we have
$$
\alpha(h,r)\leq C\cdot\tilde{C}\cdot \frac{h^1}{r^\lambda}
$$
which means that $X^{\sigma}$ ($\sigma=\sigma^0_R$) is in the class
$\cm(1, \sup_x \beta_\infty^x )$ of Manstavi\v{c}ius. The result
follows from \cite{manstavicius} Theorem 1.3, as $R\uparrow\infty$.
\end{proof}

We can use the indices to obtain information on the H\"{o}lder and growth behaviour of the solution of the SDE \eqref{sde}. As usual, we write $(X_\cdot -x)_t^*:= \sup_{0\leq s\leq t} \abs{X_s-x}$.

\begin{corollary}\label{cor:hoelder}
    Let $X^x=(X^x_t)_{t\geq 0}$ be the solution of the SDE \eqref{sde} where $Z$ is a L\'evy process with characteristic exponent $\psi$ satisfying the sector condition $|\Im\psi(\xi)| \leq c_0\Re\psi(\xi)$ for some constant $c_0>0$. Denote by $\beta_\infty^x$ and $\beta_0$ the generalized upper indices of $X$. Then
    \begin{gather*}
    \lim_{t\to 0} t^{- 1/\lambda} (X_\cdot -x)_t^* = 0 \text{\ \ if\ \ } \lambda > \sup_x\beta_\infty^x  \quad\text{and}\quad
    \lim_{t\to \infty} t^{- 1/\lambda} (X_\cdot -x)_t^* = 0 \text{\ \ if\ \ } 0 < \lambda < \beta_0.
    \end{gather*}
    Under the assumptions of Theorem \ref{theorem:sdeindex}, $\sup_x\beta_\infty^x$ is the upper index of the driving L\'evy process: $\beta_\infty^\psi$.
\end{corollary}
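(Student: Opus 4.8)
The plan is to treat the small-time and large-time statements separately; both rest on the maximal inequality of Schilling \cite{schilling98} (Lemma 4.1 and Lemma 5.1) that already powered the proof of Corollary \ref{cor:variation}, namely
\[
    \Pp^x\Big(\sup_{0\le s\le t}\abs{X_s-x}\ge r\Big)
    \le C\cdot t\,\sup_{\abs{y-x}\le 2r}\sup_{\abs{{\mathsf{e}}}\le 1}\Big|p\Big(y,\tfrac{{\mathsf{e}}}{r}\Big)\Big|,
\]
combined with Lemma \ref{lem:asymp}, which replaces the double supremum by $H(x,r)$ up to a multiplicative constant, so that $\Pp^x\big((X_\cdot-x)^*_t\ge r\big)\le C'\,t\,H(x,r)$. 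The overall strategy is a Borel--Cantelli argument along a geometric sequence of times followed by a monotonicity (``filling in'') step that uses that $t\mapsto (X_\cdot-x)^*_t$ is non-decreasing.

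For the small-time statement I fix $\lambda>\sup_x\beta^x_\infty$ and choose an intermediate exponent $\lambda'$ with $\sup_x\beta^x_\infty<\lambda'<\lambda$. By the definition of $\beta^x_\infty$ in Definition \ref{def:indices} we then have $\limsup_{R\downarrow 0}R^{\lambda'}H(x,R)=0$, i.e.\ $H(x,R)=o(R^{-\lambda'})$ as $R\downarrow 0$. Setting $t_n:=2^{-n}$ and $r_n:=\varepsilon\,t_n^{1/\lambda}\downarrow 0$, the inequality above gives
\[
    \Pp^x\big((X_\cdot-x)^*_{t_n}\ge r_n\big)
    \le C'\,t_n\,H(x,r_n)
    = o\big(t_n\,r_n^{-\lambda'}\big)
    = o\big(t_n^{\,1-\lambda'/\lambda}\big),
\]
and since $\lambda'<\lambda$ the exponent $1-\lambda'/\lambda$ is strictly positive, so the right-hand side is summable in $n$. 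Borel--Cantelli yields $(X_\cdot-x)^*_{t_n}<\varepsilon\,t_n^{1/\lambda}$ for all large $n$, $\Pp^x$-a.s.; filling in $t\in[t_{n+1},t_n]$ by monotonicity bounds $t^{-1/\lambda}(X_\cdot-x)^*_t$ by $\varepsilon\,2^{1/\lambda}$. As $\varepsilon>0$ is arbitrary, $\lim_{t\downarrow 0}t^{-1/\lambda}(X_\cdot-x)^*_t=0$ almost surely.

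For the large-time statement the sector condition is precisely what makes $\beta_0$ well defined (cf.\ Definition \ref{def:indices}). I fix $0<\lambda<\beta_0$ and pick $\lambda'$ with $\lambda<\lambda'<\beta_0$; by definition $\limsup_{R\to\infty}R^{\lambda'}\sup_x H(x,R)=0$, hence $H(x,R)\le\sup_x H(x,R)=o(R^{-\lambda'})$ as $R\to\infty$. Running the same estimate with $t_n:=2^{n}$ and $r_n:=\varepsilon\,t_n^{1/\lambda}\to\infty$ produces the bound $o\big(t_n^{\,1-\lambda'/\lambda}\big)$, and here $1-\lambda'/\lambda<0$ because $\lambda<\lambda'$, so the terms are again summable. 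Borel--Cantelli and the monotone filling-in over $t\in[t_n,t_{n+1}]$ give $\lim_{t\to\infty}t^{-1/\lambda}(X_\cdot-x)^*_t=0$ almost surely. The final assertion is then immediate from Theorem \ref{theorem:sdeindex}: under its hypotheses $\beta^x_\infty\equiv\beta^\psi_\infty$ for every $x$, so $\sup_x\beta^x_\infty=\beta^\psi_\infty$.

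I expect the only genuinely delicate point to be the bookkeeping of the uniformity in $x$ together with matching each index to the correct regime of $R$. In the small-time regime the neighbourhood $\{\abs{y-x}\le 2r\}$ built into $H$ is automatically absorbed by $\beta^x_\infty$ (a fortiori by $\sup_x\beta^x_\infty$) as $r\downarrow 0$, whereas in the large-time regime one must pass through the genuinely uniform index $\sup_x H(x,R)$ as $R\to\infty$; keeping these two separate, and correctly pairing $r\downarrow0$ with $\beta^x_\infty$ and $r\to\infty$ with $\beta_0$, is the main thing to get right. Everything else is the routine Borel--Cantelli/monotonicity scheme already used for Corollary \ref{cor:variation}.
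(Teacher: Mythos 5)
Your argument is correct, but it is a genuinely different route from the paper's: the paper proves this corollary in two lines, by citing the abstract growth theorems of \cite{schilling98} (Theorems 4.3 and 4.6) and combining them with Proposition \ref{prop:index} and Theorem \ref{theorem:sdeindex}, the only substantive remark being that the sector condition for $p(x,\xi)=\psi(\Phi^\top\!(x)\xi)$ is directly inherited from that of $\psi$, since $\abs{\Im p(x,\xi)}=\abs{\Im\psi(\Phi^\top\!(x)\xi)}\leq c_0\,\Re\psi(\Phi^\top\!(x)\xi)=c_0\,\Re p(x,\xi)$. What you have done instead is to reconstruct the cited abstract results from first principles, using only the maximal inequality of \cite{schilling98} (Lemmas 4.1 and 5.1, the same input the paper uses for Corollary \ref{cor:variation}), Lemma \ref{lem:asymp}, a Borel--Cantelli argument along $t_n=2^{\mp n}$, and the monotone filling-in. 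Your bookkeeping checks out: interposing a second auxiliary exponent in Definition \ref{def:indices} does give $H(x,R)=o(R^{-\lambda'})$ as $R\downarrow 0$ for $\lambda'>\sup_x\beta_\infty^x$, and $\sup_x H(x,R)=o(R^{-\lambda'})$ as $R\to\infty$ for $\lambda'<\beta_0$; the summability conditions $1-\lambda'/\lambda>0$ (small time, $\lambda'<\lambda$) and $1-\lambda'/\lambda<0$ (large time, $\lambda'>\lambda$) are correctly paired with the two regimes of $R$; and since the maximal inequality holds for all $t,r>0$ from the fixed starting point $x$, you even avoid the strong Markov patching that makes the uniform index $\sup_x H(x,R)$ necessary in the original reference. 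Your route buys a self-contained and quite transparent proof in which the sector condition plays no role beyond legitimising the definition of $\beta_0$; the paper's route buys brevity and delegates the (nontrivial) role of the sector condition to the hypotheses of the cited Theorem 4.6. Two caveats, neither fatal: first, exactly as in the paper's own proof of Corollary \ref{cor:variation}, your use of Schilling's maximal inequality tacitly presupposes that $X^x$ is a Feller process satisfying \eqref{rich} (e.g.\ $\Phi$ bounded, via Theorem \ref{thm:rich}), so you should state this standing assumption; second, the Borel--Cantelli step produces one null set per $\varepsilon$, so the final a.s.\ statement should be obtained by intersecting over a countable sequence $\varepsilon=1/k$ --- your ``as $\varepsilon>0$ is arbitrary'' glosses over this standard but necessary point.
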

\begin{proof}
    This is a combination of Proposition \ref{prop:index} and Theorem \ref{theorem:sdeindex} with the abstract result from \cite{schilling98}, Theorems 4.3 and 4.6. For the growth result as $t\to\infty$ we need the sector condition for the symbol $p(x,\xi)$ which is directly inherited from the sector condition of $\psi$. Note that one can identify values for $\lambda$ by using, in general different, indices such that the above limits become $+\infty$.
\end{proof}

Let us finally indicate how we can measure the `smoothness' of the
sample paths of the solution of the SDE \eqref{sde}. Since we deal
with c\`adl\`ag-functions, in general, the right scale of function
spaces are (polynomially weighted) Besov spaces
$B_q^s(L^p((1+t^2)^{-\mu/2}\,dt))$ with parameters $p,q\in
(0,\infty]$ and $s,\mu>0$. We refer to the monographs by Triebel
\cite{tri-mono} and the survey \cite{devore} by DeVore for details.
Note that information on Besov regularity is important if one is
interested in the effectiveness of numerical adaptive algorithms for
the solutions of an SDE. In a deterministic context this is
discussed in \cite{devore}.

\begin{corollary}\label{cor:besov}
    Let $X^x=(X^x_t)_{t\geq 0}$ be the solution of the SDE \eqref{sde} where $Z$ is a L\'evy process with non-degenerate (i.e.\ non-constant) characteristic exponent $\psi$. Denote by $\beta_\infty^x$ and $\beta_0$ the generalized upper indices of $X$. Then we have almost surely
    \begin{gather*}
        \{t\mapsto X_t^x\} \in B_{q}^s(L^p((1+t^2)^{-\mu/2}\,dt))
        \quad\text{if}\quad
        s\cdot\sup_y\{p,q,\beta_\infty^y\}<1
        \text{\ \ and\ \ }\mu > \frac 1{\beta_0}+\frac 1p.
    \end{gather*}
    In particular we get locally
    \begin{gather*}
        \{t\mapsto X_t^x\} \in B_{q}^{s,\textup{loc}}(L^p(dt))
        \quad\text{if}\quad
        s\cdot\sup_y\{p,q,\beta_\infty^y\}<1
    \intertext{and}
        \{t\mapsto X_t^x\} \not\in B_{q}^{s,\textup{loc}}(L^p(dt))
        \quad\text{if}\quad
        sp > 1.
    \end{gather*}
\end{corollary}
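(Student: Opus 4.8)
The plan is to split the two defining features of a weighted Besov function---local smoothness and global (weighted) integrability---and to match them against the two path quantities we already control: the local oscillation, governed by $\sup_y\beta_\infty^y$, and the growth at infinity, governed by $\beta_0$. The smoothness will come from the $\gamma$-variation estimate of Corollary \ref{cor:variation}, the integrability from the growth estimate of Corollary \ref{cor:hoelder}, and the passage from these path properties to Besov membership will be taken from the embedding theory between variation spaces and Besov spaces described in \cite{tri-mono,devore,schilling98}.

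First I would establish the local smoothness, which yields the second displayed line. By Corollary \ref{cor:variation} the path $t\mapsto X_t^x$ has almost surely finite $\gamma$-variation on every compact interval as soon as $\gamma>\sup_y\beta_\infty^y$. A c\`adl\`ag function of finite $\gamma$-variation lies locally in $B_q^s(L^p(dt))$ once its smoothness order stays below the critical value $1/\gamma$ and below the thresholds $1/p$ and $1/q$ imposed by the one-dimensional embedding theory; these three restrictions combine into the single condition $s\cdot\max\{p,q,\gamma\}<1$. Given $s\cdot\sup_y\{p,q,\beta_\infty^y\}<1$ I may pick $\gamma$ slightly larger than $\sup_y\beta_\infty^y$ with $s\cdot\max\{p,q,\gamma\}<1$ still valid, which gives the desired local membership.

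For the weighted statement on $[0,\infty)$, which is the first displayed line, I would separately control the integrability against the multiplier weight $(1+t^2)^{-\mu/2}$. The growth index $\beta_0$ bounds the increase of $(X_\cdot-x)_t^*$: by the $t\to\infty$ part of Corollary \ref{cor:hoelder} one has $t^{-1/\lambda}(X_\cdot-x)_t^*\to 0$ for every $0<\lambda<\beta_0$ (the sector condition needed here being implicit in the very use of $\beta_0$), so the paths grow at most like $t^{1/\beta_0+\varepsilon}$. Feeding this into the weighted $L^p$-norm, the integral $\int_0^\infty |X_t^x|^p (1+t^2)^{-\mu p/2}\,dt$ converges once $\mu p>p/\beta_0+1$, i.e.\ $\mu>1/\beta_0+1/p$; since the higher difference terms in the Besov seminorm are dominated by the same growth, the identical weight condition makes the full weighted seminorm finite. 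Combining this with the local smoothness of the previous step gives membership in $B_q^s(L^p((1+t^2)^{-\mu/2}\,dt))$.

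The non-membership in the third line is of a different and simpler nature and is caused by the jumps of $X^x$: as soon as the driving L\'evy measure is non-trivial the process genuinely jumps, and near such a jump the path behaves like a Heaviside step, whose critical smoothness in $B_q^{\,\cdot}(L^p)$ is exactly $1/p$. A single jump therefore obstructs membership in $B_q^{s,\mathrm{loc}}(L^p(dt))$ for $s>1/p$, i.e.\ for $sp>1$; equivalently, $B_q^s(L^p)\hookrightarrow C$ when $sp>1$, which is incompatible with discontinuous paths. I expect the main obstacle to be the first step---making the passage from finite $\gamma$-variation to Besov membership quantitatively sharp---because the embeddings between $p$-variation spaces and Besov spaces are delicate and must be set up to accommodate c\`adl\`ag rather than continuous sample paths; once the correct embedding is in hand, the remaining argument is the index bookkeeping furnished by Theorem \ref{theorem:sdeindex}, Corollary \ref{cor:variation} and Corollary \ref{cor:hoelder}.
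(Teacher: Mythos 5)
Your route is genuinely different from the paper's, whose proof is essentially a one-line citation: Theorems 4.2 and 6.5 of \cite{sch-besov} are applied to the symbol $p(x,\xi)=\psi(\Phi^\top\!(x)\xi)$ (with the observation that only the existence of the symbol, not the Feller property, is used there), and the parameter restriction $s>(p^{-1}-1)^+$ from \cite{sch-besov} is removed by the elementary embedding $B_q^s(L^p)\hookrightarrow B_r^t(L^p)$, $t<s$, cf.\ \cite{tri-mono}. Your local membership step is essentially sound: a locally bounded c\`adl\`ag function of finite $\gamma$-variation satisfies $\int_0^T\abs{f(u+h)-f(u)}^\gamma\,du\leq h\,V^\gamma(f;[0,T+h])$, which gives $\omega_p(f,t)\lesssim t^{1/\max\{\gamma,p\}}$ and hence local membership under $s\cdot\max\{p,\gamma\}<1$, and Corollary \ref{cor:variation} lets you take $\gamma$ slightly above $\sup_y\beta_\infty^y$. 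The genuine gap is in your global weighted step. The weighted Besov seminorm requires the smallness of the translation modulus, computed in the \emph{weighted} $L^p$-norm over all of $(0,\infty)$, as $t\downarrow 0$; for this you need quantitative control of the local oscillation uniformly over time blocks $[n,n+1]$, e.g.\ that the block variations or block oscillations grow at most polynomially in $n$ almost surely. Corollary \ref{cor:variation} gives only a.s.\ finiteness of $V^\gamma(X;[0,T])$ for each fixed $T$, with no rate in $T$, and the growth bound $(X_\cdot-x)_t^*\lesssim t^{1/\beta_0+\varepsilon}$ bounds the increments by a quantity of constant order in $h$, so it contributes integrability but no smoothness in the tail region; the two statements together do not bound the weighted modulus. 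What is actually needed are starting-point-uniform maximal estimates of the type $\Pp^y\big(\sup_{s\leq t}\abs{X_s-y}\geq r\big)\leq C\,t\,H(y,r)$ (as in \cite{schilling98} and the proof of Corollary \ref{cor:variation}), summed over dyadic blocks via the Markov property and Borel--Cantelli --- which is precisely the machinery inside Theorem 4.2 of \cite{sch-besov} that the paper invokes.

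The non-membership step has a second genuine gap: your argument is purely a jump obstruction, but the hypothesis of the corollary is only that $\psi$ is non-constant, which includes drivers with continuous paths (e.g.\ $Z$ a Brownian motion, $\psi(\xi)=\frac12\abs{\xi}^2$). In that case $X^x$ has continuous paths, the Heaviside argument produces nothing, and the embedding $B_q^s(L^p)\hookrightarrow C^{s-1/p}$ for $sp>1$ yields no contradiction either, since $s-1/p$ may well lie below the H\"older exponent of the paths; so your proof simply does not cover the stated claim in the continuous case, whereas the paper's citation of Theorem 6.5 in \cite{sch-besov} rests on probabilistic lower oscillation estimates tied to the non-degeneracy of the symbol rather than on discontinuity. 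Even in the jump case your argument is incomplete as written: the jumps of the solution are $\Delta X_s=\Phi(X_{s-})\Delta Z_s$, so you must additionally rule out that $\Phi$ vanishes along the path at all jump times of $Z$ in the interval under consideration before a single genuine jump of $X$ is guaranteed.
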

\begin{proof}
    This is a consequence of Theorems 4.2 and 6.5 in \cite{sch-besov}. Note that, although all statements are in terms of Feller processes, only the existence of a symbol of the underlying process is required. In \cite{sch-besov} we assume that the smoothness index $s$ satisfies the condition $s> (p^{-1}-1)^+$. This restriction can be easily overcome by using the imbedding $B_q^s(L^p)\hookrightarrow B_r^t(L^p)$ which holds for all $s>t$, all $p\in (0,\infty]$ and all $r,q\in (0,\infty]$, see \cite{tri-mono}, vol.\ III, Theorem 1.97.
\end{proof}

\section{Appendix}
For the readers' convenience we collect in this appendix some variations on standard estimates for symbols of Feller processes. They are based on methods from \cite{schilling98} and can also be found in \cite{schnurr-diss}. For the rest of the paper we use the notation $\nnorm{f}_K := \sup_{z\in K} |f(z)|$ where $|\cdot|$ can be a vector or matrix norm.

\begin{lemma}\label{l1}
    We have
$$
    \frac{|y|^2}{1+|y|^2} = \int \left(1-\cos(y^\top \rho)\right)\, g(\rho)\,d\rho,
    \qquad y\in\rd,
$$
    where
$
    g(\rho) = \frac 12 \int_0^\infty
            (2\pi\lambda)^{-d/2}\,e^{-|\rho|^2/2\lambda}\,e^{-\lambda/2}\,d\lambda
$
    is integrable and has absolute moments of arbitrary order.
\end{lemma}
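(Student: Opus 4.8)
The plan is to recognise the inner kernel $(2\pi\lambda)^{-d/2} e^{-|\rho|^2/(2\lambda)}$ as the density of a centred Gaussian random vector with covariance matrix $\lambda\,\mathrm{Id}$, and to exploit that its characteristic function is explicitly known. First I would interchange the $\rho$- and $\lambda$-integrations. Since $1-\cos(y^\top\rho)\ge 0$ and the kernel is nonnegative, Tonelli's theorem applies without any further estimate and justifies the interchange.

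After swapping, the task reduces to evaluating, for fixed $\lambda>0$, the inner integral over $\rho$. Using $\int_\rd (2\pi\lambda)^{-d/2} e^{-|\rho|^2/(2\lambda)}\,d\rho = 1$ together with the Gaussian Fourier transform
\[
    \int_\rd e^{iy^\top\rho}\,(2\pi\lambda)^{-d/2} e^{-|\rho|^2/(2\lambda)}\,d\rho = e^{-\lambda|y|^2/2}
\]
(whose value is real by the symmetry $\rho\mapsto-\rho$, so that it coincides with the cosine integral while the sine part vanishes by oddness), I obtain
\[
    \int_\rd \big(1-\cos(y^\top\rho)\big)\,(2\pi\lambda)^{-d/2} e^{-|\rho|^2/(2\lambda)}\,d\rho = 1 - e^{-\lambda|y|^2/2}.
\]
It then remains to carry out the elementary $\lambda$-integration,
\[
    \frac 12 \int_0^\infty \big(1-e^{-\lambda|y|^2/2}\big)\, e^{-\lambda/2}\,d\lambda
    = 1 - \frac{1}{1+|y|^2} = \frac{|y|^2}{1+|y|^2},
\]
where I use $\tfrac12\int_0^\infty e^{-\lambda/2}\,d\lambda = 1$ and $\tfrac12\int_0^\infty e^{-\lambda(1+|y|^2)/2}\,d\lambda = (1+|y|^2)^{-1}$. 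This establishes the claimed identity.

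For integrability and the finiteness of all absolute moments I would argue by the same Tonelli interchange. Integrating the kernel directly gives $\int_\rd g(\rho)\,d\rho = \tfrac12\int_0^\infty e^{-\lambda/2}\,d\lambda = 1$, so $g$ is integrable. For the $j$-th absolute moment,
\[
    \int_\rd |\rho|^j\, g(\rho)\,d\rho = \frac12 \int_0^\infty \Big(\int_\rd |\rho|^j\,(2\pi\lambda)^{-d/2} e^{-|\rho|^2/(2\lambda)}\,d\rho\Big) e^{-\lambda/2}\,d\lambda,
\]
and the inner integral is the $j$-th absolute moment of an $N(0,\lambda\,\mathrm{Id})$ vector, which by scaling equals $m_j\,\lambda^{j/2}$ for a finite constant $m_j$ depending only on $j$ and $d$. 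The outer integral is then $\tfrac{m_j}{2}\int_0^\infty \lambda^{j/2} e^{-\lambda/2}\,d\lambda = m_j\,2^{j/2}\,\Gamma\!\big(j/2+1\big) < \infty$, so every absolute moment is finite.

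I do not expect any genuine obstacle. The only point requiring care is the justification of the two Fubini/Tonelli interchanges, and these are immediate from the nonnegativity of the integrands; the computational heart is simply the Gaussian Fourier transform together with two one-dimensional exponential integrals.
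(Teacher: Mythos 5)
Your proposal is correct and follows essentially the same route as the paper: both rest on the Gaussian Fourier transform $\int_{\mathds{R}^d} e^{iy^\top\rho}(2\pi\lambda)^{-d/2}e^{-|\rho|^2/(2\lambda)}\,d\rho = e^{-\lambda|y|^2/2}$, the elementary identity $\frac12\int_0^\infty\bigl(1-e^{-\lambda|y|^2/2}\bigr)e^{-\lambda/2}\,d\lambda = |y|^2/(1+|y|^2)$, and a Tonelli interchange plus the scaling $\rho=\sqrt{\lambda}\,\eta$ reducing the moments to Gaussian moments times a Gamma integral. The only cosmetic difference is that you work with $1-\cos(y^\top\rho)\geq 0$ directly (so Tonelli applies immediately), whereas the paper integrates $1-e^{iy^\top\rho}$ via Fubini and takes real parts at the end.
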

\begin{proof}
    The Tonelli-Fubini Theorem and a change of variables show for $k\in\bbn_0$
\begin{align*}
    \int |\rho|^k\,g(\rho)\,d\rho
    &=
    \frac{1}{2} \int_0^\infty (2\pi\lambda)^{-d/2} \int |\rho|^k\,e^{-|\rho|^2/2\lambda}
        \,d\rho \; e^{-\lambda/2}\,d\lambda\\
    &=
    \frac{1}{2} \int_0^\infty (2\pi\lambda)^{-d/2} \int \lambda^{k/2} |\eta|^k
        \,e^{-|\eta|^2/2} \lambda^{d/2}\,d\eta \; e^{-\lambda/2}\,d\lambda\\
    &=
    \frac{1}{2} (2\pi)^{-d/2} \int |\eta|^k \,e^{-|\eta|^2/2}\,d\eta\
    \int_0^\infty \lambda^{(k+d)/2}\,e^{-\lambda/2}\,d\lambda ,
\end{align*}
i.e., $g$ has absolute moments of any order. Moreover, the elementary formula
$$
    e^{-\lambda |y|^2/2} = (2\pi\lambda)^{-d/2}\int
                            e^{-|\rho|^2/2\lambda} \, e^{iy^\top \rho}\, d\rho
$$
and Fubini's Theorem yield
\begin{align*}
    \frac{|y|^2}{1+|y|^2}
&=
    \frac{1}{2}\int_0^\infty \big( 1-e^{-\lambda |y|^2/2}\big)\,e^{-\lambda/2}\,d\lambda\\   &=
    \frac{1}{2}\int_0^\infty\int (2\pi\lambda)^{-d/2} \big(1-e^{iy^\top\rho}\big)
    e^{-|\rho|^2/2\lambda}\, e^{-\lambda/2}\,d\rho\,d\lambda\\
&=
    \int \big(1-e^{iy^\top\rho}\big)\,g(\rho)\,d\rho .
\end{align*}
The assertion follows since the left-hand side is real-valued.
\end{proof}

\begin{lemma}\label{l3}
Let $p(x,\xi)$ be a negative definite symbol of the form \eqref{lkfx} with L\'evy triplet $(\ell(x),Q(x),N(x,dy))$ and let $K\subset\rd$ be a compact set or $K=\rd$. Then the following assertions are equivalent.
\begin{enumerate}
    \item[\upshape (a)] $\displaystyle\nnorm{p(\cdot,\xi)}_K\leq
                c_p(1+|\xi|^2),\qquad\xi\in\rd$;
    \item[\upshape (b)] $\displaystyle\nnorm\ell_K + \nnorm Q_K +
                \left\|\int_{y\neq 0} \frac{|y|^2}{1+|y|^2}\,N(\cdot,dy) \right\|_K
                < \infty$;
    \item[\upshape (c)] $\displaystyle\sup_{|\xi|\leq 1}\sup_{x\in K}|p(x,\xi)| < \infty$.
\end{enumerate}
    If one, hence all, of the above conditions hold, there exists a
    constant $c>0$ such that
$$
    \nnorm\ell_K + \nnorm Q_K +
                \left\|\int_{y\neq 0} \frac{|y|^2}{1+|y|^2}\,N(\cdot,dy) \right\|_K
    \leq
    c\,\sup_{|\xi|\leq 1}\sup_{x\in K}|p(x,\xi)|.
$$
\end{lemma}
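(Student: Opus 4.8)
The plan is to establish the cycle of implications (a) $\Rightarrow$ (c) $\Rightarrow$ (b) $\Rightarrow$ (a) and to extract the closing quantitative estimate directly from the proof of (c) $\Rightarrow$ (b). The implication (a) $\Rightarrow$ (c) is immediate, since restricting the bound in (a) to $\abs\xi\leq 1$ controls the supremum in (c) by $2c_p$.

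The heart of the matter is (c) $\Rightarrow$ (b); here I would bound the three components of the triplet separately in terms of $M:=\sup_{\abs\xi\leq 1}\sup_{x\in K}\abs{p(x,\xi)}$. The first tool is the subadditivity of $\xi\mapsto\sqrt{\abs{p(x,\xi)}}$, which holds because $p(x,\cdot)$ is negative definite: writing $\xi=n\,(\xi/n)$ with $n=\lceil\abs\xi\rceil$ gives the global growth bound $\abs{p(x,\xi)}\leq 2(1+\abs\xi^2)M$ valid for all $\xi\in\rd$ and $x\in K$. For the Gaussian part I would use that
\[
    \Re p(x,\xi)=\tfrac12\,\xi^\top Q(x)\xi+\int_{y\neq 0}(1-\cos(\xi^\top y))\,N(x,dy)
\]
is a sum of two nonnegative terms, whence $\tfrac12\,\xi^\top Q(x)\xi\leq\Re p(x,\xi)\leq M$ for $\abs\xi\leq 1$; as $Q(x)$ is symmetric and positive semidefinite, the supremum over the unit ball equals its operator norm and yields $\nnorm Q_K\leq 2M$. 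For the jump part I would invoke Lemma \ref{l1}, inserting the representation of $\abs y^2/(1+\abs y^2)$ into the integral and using Tonelli (all integrands are nonnegative):
\[
    \int_{y\neq 0}\frac{\abs y^2}{1+\abs y^2}\,N(x,dy)
    =\int\Big(\int_{y\neq 0}(1-\cos(y^\top\rho))\,N(x,dy)\Big)g(\rho)\,d\rho
    \leq\int\Re p(x,\rho)\,g(\rho)\,d\rho,
\]
and the global growth bound together with the finiteness of the absolute moments of $g$ (Lemma \ref{l1}) dominates the right-hand side by $c\,M$.

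The delicate point is the drift $\ell$, which cannot be read off from $\Re p$ and is entangled with the jump term inside the imaginary part; the key is that the jump contribution has just been controlled. Writing
\[
    \Im p(x,\xi)=-\ell^\top(x)\xi-\int_{y\neq 0}\big(\sin(\xi^\top y)-\xi^\top y\,\I_{\{\abs y<1\}}\big)\,N(x,dy)
\]
and bounding, for $\abs\xi\leq 1$, the integrand by a constant multiple of $\abs y^2/(1+\abs y^2)$ --- splitting at $\abs y=1$, a Taylor estimate $\abs{\sin(\xi^\top y)-\xi^\top y}\leq C\abs y^2$ handles $\abs y<1$, while $\abs{\sin(\xi^\top y)}\leq 1\leq 2\abs y^2/(1+\abs y^2)$ handles $\abs y\geq 1$ --- I obtain $\abs{\ell^\top(x)\xi}\leq\abs{p(x,\xi)}+C\int_{y\neq 0}\abs y^2/(1+\abs y^2)\,N(x,dy)$. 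Taking the supremum over $\abs\xi\leq 1$ and inserting the jump bound already established gives $\nnorm\ell_K\leq c'M$. Adding the three estimates proves (b) and simultaneously produces the final quantitative inequality with an explicit constant.

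For (b) $\Rightarrow$ (a) I would read off the L\'evy--Khintchine formula \eqref{lkfx} term by term: the drift contributes at most $\nnorm\ell_K\abs\xi$, the Gaussian part at most $\tfrac12\nnorm Q_K\abs\xi^2$, and the standard pointwise bound $\abs{e^{i\xi^\top y}-1-i\xi^\top y\,\I_{\{\abs y<1\}}}\leq C(1+\abs\xi^2)\,\abs y^2/(1+\abs y^2)$ (again by splitting at $\abs y=1$) bounds the integral by $C(1+\abs\xi^2)\,\nnorm{\int_{y\neq 0}\abs y^2/(1+\abs y^2)\,N(\cdot,dy)}_K$, so that all three terms are $O(1+\abs\xi^2)$ uniformly on $K$. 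I expect the isolation of $\ell$ to be the main obstacle: $Q$ and $N$ are visible directly in $\Re p$, whereas $\ell$ becomes accessible only after the jump integral has been estimated and subtracted from $\Im p$.
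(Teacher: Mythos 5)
Your proof is correct and uses the same ingredients as the paper's: Lemma \ref{l1} with Tonelli for the jump measure, positivity of the Gaussian and jump contributions to $\Re p$ for $\nnorm{Q}_K$, extraction of $\ell$ from $\Im p$ once the jump integral is under control, term-by-term Taylor bounds for the L\'evy--Khintchine representation, and subadditivity of $\xi\mapsto\sqrt{|p(x,\xi)|}$ for the doubling estimate $|p(x,\xi)|\leq 2(1+|\xi|^2)\sup_{|\eta|\leq 1}\sup_{x\in K}|p(x,\eta)|$. The only difference is organizational: you run the cycle (a)$\Rightarrow$(c)$\Rightarrow$(b)$\Rightarrow$(a) instead of the paper's (a)$\Rightarrow$(b)$\Rightarrow$(c)$\Rightarrow$(a), absorbing the subadditivity step into (c)$\Rightarrow$(b), so the closing quantitative inequality falls out directly with an explicit constant, whereas the paper obtains it afterwards by inspecting the constants of (a)$\Rightarrow$(b) and noting $c_p=2\sup_{x\in K}\sup_{|\eta|\leq 1}|p(x,\eta)|$ from (c)$\Rightarrow$(a).
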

\begin{proof}
    (a)$\Rightarrow$(b). By Lemma \ref{l1} we have
\begin{align*}
    \int_{y\neq 0} \frac{|y|^2}{1+|y|^2}\,N(x,dy)
    &=
    \int_{y\neq 0}\int \big(1-\cos (\eta^\top y)\big)\,g(\eta)\,d\eta\,N(x,dy)\\
    &=
    \int \big(\Re p(x,\eta) - \eta^\top Q(x)\eta\big)\,g(\eta)\,d\eta\\
    &\leq
    \int \Re p(x,\eta)\,g(\eta)\,d\eta\\
    &\leq
    c_p\int \left(1+|\eta|^2\right)\,g(\eta)\,d\eta
\end{align*}
uniformly for all $x\in K$. Using Taylor's formula and Lemma \ref{l1} we find
\begin{align*}
    |\ell(x)^\top\xi|
&\leq
    \big|\Im p(x,\xi)\big| + \Im \int_{y\neq 0} \left|1-e^{i\xi^\top y}
        +\frac{i\xi^\top y}{1+|y|^2} \right| \,N(x,dy)\\
&\leq
    \left(c_p + c\int_{y\neq 0} \frac{|y|^2}{1+|y|^2}\,N(x,dy) \right)\,(1+|\xi|^2)\\
&\leq
    c_p\left(1 + c\int \left(1+|\eta|^2\right)\,g(\eta)\,d\eta \right)\,(1+|\xi|^2)
\end{align*}
uniformly in $x\in K$ and for all $\xi\in\rd$, so $\nnorm\ell_K < \infty$. Finally,
$$
    |\xi^\top Q(x)\xi|
    \leq
    \Re p(x,\xi)
    \leq
    |p(x,\xi)|
    \leq
    c_p(1+|\xi|^2)
$$
which shows that $\nnorm Q_K < \infty$.

\medskip\noindent
(b)$\Rightarrow$(c). Using the L\'evy-Khinchine representation for
$p(x,\xi)$ and Taylor's formula we find
$$
    |p(x,\xi)|
    \leq
    \nnorm\ell_K \,|\xi| + \nnorm Q_K\,|\xi|^2
    + 2\int \frac{|y|^2}{1+|y|^2}\,N(x,dy) \, (1+|\xi|^2)
$$
(we use the $\ell^2$-norm in $\rd$ and $\bbr^{d\times d}$) and (c) follows.

\medskip\noindent
(c)$\Rightarrow$(a). Set $P(\xi):=\sup_{x\in K}|p(x,\xi)|$. Since both $\xi\mapsto\sqrt{p(x,\xi)}$ and the $\sup_x$ are subadditive, we conclude
$$
    \sqrt{P(\xi+\eta)}
    \leq \sqrt{P(\xi)} + \sqrt{P(\eta)}, \qquad \xi,\eta\in\rd,
$$
i.e., $\sqrt{P(\cdot)}$ is subadditive. Fix $\xi$ and choose the unique $N = N_\xi\in\bbn$ such that $N-1\leq |\xi| < N$. Applying the subadditivity estimate $N$ times gives
$$
    P(\xi)
    \leq N^2P\left(\tfrac\xi N \right)
    \leq N^2 \sup_{|\eta|\leq 1} P(\eta)
    \leq 2\left(1+|\xi|^2\right)\sup_{|\eta|\leq 1} P(\eta)
$$
and this is claimed in (a).

An inspection of the proof of (a)$\Rightarrow$(b) shows that each of
the terms $\nnorm\ell_K$, $\nnorm Q_K$ and $\left\|\int_{y\neq 0}
|y|^2(1+|y|^2)^{-1}\,N(\cdot,dy) \right\|_K$ is bounded by constants
of the form $c\cdot c_p$ where $c_p$ is from (a). The proof of
(c)$\Rightarrow$(a) reveals that $c_p = 2\sup_{x\in
K}\sup_{|\eta|\leq 1}|p(x,\eta)|$ which proves the last statement.
\end{proof}


\end{document}